\def\sqw{\hbox{\rlap{\leavevmode\raise.3ex\hbox{$\sqcap$}}$%
\sqcup$}}
 \def\br{\hbox{\it I\hskip -2pt R}} \def\bn{\hbox{\it 
I\hskip -2pt N}} \def\bz{\hbox{\it Z\hskip -4pt Z}}  \def\bc{\hbox{\it l\hskip -5.5pt C\/}}   
 \newtheorem{theorem}{Theorem} 
\newtheorem{lemma}{Lemma} \newtheorem{proposition}{Proposition} 
\newtheorem{definition}{Definition} \newtheorem{example}{Example} 
\newtheorem{remark}{Remark}  
\newtheorem{corollary}{Corollary}
\def\dim{{\rm \ dim}\,}
\def\card{{\rm \ card\,} }
\def\reg{{\rm \ reg}\, }
\def\fz{\mathbb{Z}}
\def\fc{\mathbb{C}}
\newcommand{\Nbb}{{\mathbb N}}
\newcommand{\Zbb}{{\mathbb Z}}
\newcommand{\Cbb}{{\mathbb C}}
\newcommand{\bp}{{\mathbb P}}
\newcommand{\Gcal}{{\mathcal G}}
\newcommand{\Fcal}{{\mathcal F}}
\newcommand{\Jcal}{{\mathcal J}}
\newcommand{\Acal}{{\mathcal A}}
\newcommand{\Scal}{{\mathcal S}}
\newcommand{\Pcal}{{\mathcal P}}
\newcommand{\Mcal}{{\mathcal M}}
\begin{document}

%\date{}
%\maketitle
\begin{center}
\uppercase{{\bf  Segre embeddings, Hilbert series,and  
Simon Newcomb's problem}}
\end{center}
\advance\baselineskip-3pt
\vspace{2\baselineskip}
\begin{center}
{{\sc Marcel
Morales}\\
{\small Universit\'e de Grenoble I, Institut Fourier, 
UMR 5582, B.P.74,\\
38402 Saint-Martin D'H\`eres Cedex, (FRANCE)\\
and IUFM, Université de Lyon 1, 5 rue Anselme,\\ 69317 Lyon Cedex (FRANCE)}\\
 }
\vspace{\baselineskip}
\end{center}
\noindent{\bf Abstract} {\footnote{ {\it{ Partially supported by VIAS, Hanoi, Vietnam. 

MSC 2000: Primary: 13D40, Secondary  13D02; 14M25.

 Key words and phrases: Segre varieties;  Hilbert Series; Simon Newcomb's numbers; Grobner basis;  Betti numbers}}.}  
Monomial ideals and toric rings are closely related.  By consider a Grobner basis we can always associated to any ideal $I$ in a polynomial ring a monomial ideal ${\rm in}_\prec I$,
 in some special situations the monomial ideal ${\rm in}_\prec I$ is square free. On the other hand given any monomial ideal $I$ of a polynomial ring $S$,
 we can define the toric $K[I]\subset S$.  In this paper we will study   toric rings defined by Segre embeddings, 
we will prove that their $h-$ vectors coincides with the so called Simon Newcomb number's in probabilities and combinatorics.
 We solve the original question of
 Simon Newcomb by given a formula for the  Simon Newcomb's numbers involving only positive integer numbers.
\section{Introduction} 

First we recall the Simon Newcomb's problem: 

Consider a deck 
$\Scal$ 
of $N$ cards containing $b_i$ cards of face value $i$, 
for $i=1,...,n$ so that 
$b_1+...+b_n=N$.
Turn over the top card and put it down face up. Turn over the second card and place it on top of the first card if the face value of the second card is less than or equal to the first - otherwise start a new pile. Continue this process until all 
$N$ cards have been turned over. The number of possible cases with $k$ piles or rises so formed yields the Simon Newcomb number 
$A([{\bf b}],k)$, with specification 
$[{\bf b}]=[b_1,...,b_n]$.

We can also interpret Simon Newcomb's problem in the following way: We consider the sequence or  multiset 
$\displaystyle {\underbrace {1...1}_{b_1 }}\displaystyle {\underbrace {2...2}_{b_2 }}...\displaystyle {\underbrace {n...n}_{b_n }}$ and all the permutations of this multiset,
 that is the set $\Scal$ of all sequences 
$(u_k)_{1\leq k\leq N}$ of length $N:=b_1+ b_2+ ...+ b_n$ with $b_i$ times the symbol $i$. We say that a sequence $(u_k)$ has a descent in $k$  if $u_k>u_{k+1}$, 
the Simon Newcomb numbers $A([{\bf b}],k)$ counts the numbers of sequences with $k$ descents. In the special case where $b_1=...=b_n=1$ the Simon Newcomb numbers 
 are known as the the Eulerian numbers. 

Now, we go to the algebraic setting. Let $I$ be any ideal in a polynomial ring $S$, and let $\prec $ be a term order.
Let ${\rm in}_\prec I$ be the initial ideal with respect to this order, the initial complex 
$\Delta_\prec (I)$ of $I$ with respect to $\prec$
is the simplicial complex whose Stanley-Reisner ideal is the radical of 
${\rm in}_\prec I$. 
Now we consider a homogeneous toric ideal  $I_\Acal$ given by a finite set 
$ \Acal \subset \bn^n$, 
we can associated with it the rational polyhedron  ${\rm conv}(\Acal)$ the convex  hull of $\Acal$. 
In \cite{stu} it is showed that $\Delta_\prec (I_\Acal)$ is a regular triangulation  of ${\rm conv}(\Acal)$ and this triangulation is unimodular
 if and only if the ideal ${\rm in}_\prec (I_\Acal)$ is square free, in this case the toric ring $S/I_\Acal$ is projectively normal, 
in particular  is an arithmetically  Cohen-Macaulay ring and the multiplicity of the ring $S/I_\Acal$, which equals the volume of ${\rm conv}(\Acal)$ is given by the number of facets of  $\Delta_\prec (I_\Acal)$. 

In this paper we consider the case of toric ideals, associated to  the hypercube $\Mcal= \{0,...,b_1\}\times \{0,...,b_2\}\times...\times  \{0,...,b_n\}\subset \bn^n$,
  the corresponding toric variety is the Segre embedding of $\bp^{b_1}\times... \times \bp^{b_n} $ in  $\bp^{(b_1+1)\times (b_2+1)\times ...\times (b_n+1)-1}$.
 Blum \cite{bl} has showed  that there exists some term order (without exhibiting one), such that the toric ideal  $I_\Mcal$ has a quadratic Groebner basis
 and ${\rm in}_\prec (I_\Mcal)$ is square free. In \cite{ha} such Groebner basis was given and the fact that it is a Groebner basis was proved by direct computations. 
Our purpose is to exhibit such a Groebner basis
 in a more conceptual way using ideas developed in \cite{stu}, as a consequence we can describe the facets of the initial complex $\Delta_\prec (I_\Mcal)$, as the rises 
in the Simon Newcomb's problem.
 We get the Hilbert Poincar\'e  series of the toric ring $S/I_\Mcal$, 
and  we prove that  the $h$-vector of  the Hilbert Poincar\'e  series of the toric ring $S/I_\Mcal$ has a nice interpretation in terms of combinatorics and probability: 
 namely $h_k=A([{\bf b}],k)$, where $A([{\bf b}],k)$ are the Simon Newcomb's numbers. This allows us  to solve the original question of
 Simon Newcomb by given a formula for the  Simon Newcomb's numbers involving only positive integer numbers.
%insert main theorem

 \section{Hilbert's Series} \label{section1}  
 We start with a general lemma which allows to compute the $h-$polynomial of a generating series of a sequence.
\begin{lemma}\label{h-vectorbyhilbert}
Let $(a_l)_{l\in \Zbb }$ be a sequence of complex numbers, such that $a_l=0$ for  $l<\sigma $, set :
$ f(t)= \sum_{l\geq \sigma }a_lt^l,$ suppose that $f(t)=\frac{h(t)}{(1-t)^d}$ with $h(t)\in \Cbb [t,t^{-1}]$, 
 $h(t)=h_\sigma t^\sigma +....+h_0+h_1t+...+h_rt^r$, then
\begin{equation}\label{h-vector-hilbertfunction1}\mbox {for  \ }k=\sigma ,...,r; \ \   h_k= \sum_{j=0}^{k-\sigma }(-1)^j {{d}\choose{j}} a_{k-j} .\end{equation}

\end{lemma}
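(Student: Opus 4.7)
The plan is to prove the formula by multiplying the identity $f(t)(1-t)^d = h(t)$ out directly and comparing coefficients on both sides. This is essentially a routine power-series manipulation rather than a deep argument.

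First, I would expand the binomial
\begin{equation*}
(1-t)^d = \sum_{j=0}^{d} (-1)^j \binom{d}{j} t^j,
\end{equation*}
and then form the Cauchy product with $f(t) = \sum_{l \geq \sigma} a_l t^l$. Collecting the coefficient of $t^k$ in the product gives
\begin{equation*}
[t^k]\bigl(f(t)(1-t)^d\bigr) = \sum_{j} (-1)^j \binom{d}{j} a_{k-j},
\end{equation*}
where the sum ranges over $j \geq 0$ with $j \leq d$ and $k - j \geq \sigma$.

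Next, using the convention $\binom{d}{j} = 0$ for $j > d$, I can rewrite this sum as $\sum_{j=0}^{k-\sigma}(-1)^j\binom{d}{j}a_{k-j}$ without changing its value, since the contributions from indices $j > d$ vanish. On the other hand, by assumption the left-hand side also equals $[t^k]\,h(t)$, which for $k$ in the range $\sigma \le k \le r$ is precisely $h_k$ by the definition of $h(t)$. Equating the two expressions for $[t^k]\,h(t)$ yields the desired identity.

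The only thing to be slightly careful about is the indexing: the sequence $(a_l)$ is supported on $l \geq \sigma$, so the lower limit $j \geq 0$ combined with $k - j \geq \sigma$ forces $j \leq k - \sigma$; and the binomial vanishes above $d$, which is why the upper limit can be written as $k - \sigma$ rather than $\min(d, k-\sigma)$. There is no genuine obstacle here — the whole argument is an identification of coefficients — but making this index bookkeeping explicit is the one point where a careless writeup could go wrong.
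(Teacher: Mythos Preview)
Your proof is correct and follows exactly the same approach as the paper: multiply $f(t)$ by the binomial expansion of $(1-t)^d$ and compare the coefficient of $t^k$ on both sides. The paper states this in a single line as ``direct computation,'' while you have spelled out the index bookkeeping more carefully, but the argument is identical.
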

 \begin{proof}
It follows immediately by direct computation from the equality:
$$(\sum_{l\geq \sigma }a_lt^l)\times (\sum_{j=0}^d (-1)^j {{d}\choose{j}} t^j)= h_\sigma t^\sigma +....+h_0+h_1t+...+h_rt^r.$$

\end{proof}
By Theorem \ref{h-vectorbyhilbert} we  can compute the $h-$polynomial of a quotient ring $S/J$ in terms of the Hilbert function $H_{S/J}(j)$.
\begin{lemma} Let $S$ be a ring of polynomial in a finite set of variables over a field $K$, with the standard graduation, let $J\subset S$ be a  homogeneous ideal. Let  
$H_{S/J}(j)$ be the Hilbert function of $S/J$ and $\displaystyle  P_{S/J}(t)=\frac{h_0+h_1t+...+h_rt^r }{(1-t)^{d}} $, be the Hilbert-Poincaré series, where $d=\dim (S/J)$. Then
\begin{equation}\label{h-vector-hilbertfunction2}\mbox {For  \ }k=0 ,...,r; \ \ h_k= \sum_{j=0}^k (-1)^j {{d}\choose{j}} H_{S/J}(k-j) .\end{equation}
\end{lemma}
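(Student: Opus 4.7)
The plan is to recognize this statement as an immediate specialization of the preceding Lemma~\ref{h-vectorbyhilbert}. Since $J$ is a homogeneous ideal of the standard graded polynomial ring $S$, the quotient $S/J$ is non-negatively graded, so $H_{S/J}(l) = 0$ for $l < 0$ and the Hilbert--Poincaré series may be written as
\[
P_{S/J}(t) \;=\; \sum_{l \geq 0} H_{S/J}(l)\, t^l.
\]
This puts us exactly in the setting of Lemma~\ref{h-vectorbyhilbert}, with the identifications $\sigma = 0$, $a_l := H_{S/J}(l)$, $f(t) := P_{S/J}(t)$, and $h(t) = h_0 + h_1 t + \cdots + h_r t^r$.

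With these substitutions, the hypothesis $f(t) = h(t)/(1-t)^d$ of the preceding lemma is precisely the given form of $P_{S/J}(t)$, so I can invoke Lemma~\ref{h-vectorbyhilbert} directly. Its conclusion, read with $\sigma = 0$ and $a_{k-j} = H_{S/J}(k-j)$, reads for $k = 0, \ldots, r$
\[
h_k \;=\; \sum_{j=0}^{k-\sigma} (-1)^j \binom{d}{j}\, a_{k-j} \;=\; \sum_{j=0}^{k} (-1)^j \binom{d}{j}\, H_{S/J}(k-j),
\]
which is exactly formula~\eqref{h-vector-hilbertfunction2}.

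The only minor verification worth pointing out is that the upper index of the sum may indeed be taken as $k$: for $j > d$ one has $\binom{d}{j} = 0$, and for $j > k$ one has $H_{S/J}(k-j) = 0$ because $(S/J)_{k-j} = 0$, so either truncation gives the same value and matches the convention $k-\sigma = k$ used in Lemma~\ref{h-vectorbyhilbert}. There is no real obstacle here; the statement is essentially a bookkeeping reformulation of the previous lemma in the conventional language of Hilbert series of graded algebras, packaged for convenient use in the $h$-vector computations later in the paper.
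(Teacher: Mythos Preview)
Your proposal is correct and matches the paper's approach exactly: the paper states this lemma without a separate proof, prefacing it only with the remark that by Lemma~\ref{h-vectorbyhilbert} one can compute the $h$-polynomial of $S/J$ from the Hilbert function. Your argument simply fills in the routine identifications ($\sigma=0$, $a_l=H_{S/J}(l)$) that make this specialization precise.
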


Let recall the following fact:
\begin{theorem}\label{hilbertfunction-hpolynomial}
Let $(a_l)_{l\in \Zbb}$ be a sequence of complex numbers, such that $a_l=0$ for  $l<<0$, set :
$ f(t)= \sum_{l\in \Zbb}a_lt^l,$
TFAE:
\begin{itemize}
\item There exists $h(t)\in \Cbb [t,t^{-1}]$ and a natural integer $d$ such that $f(t)=\frac{h(t)}{(1-t)^d}$.
\item There exists $\Phi (t)\in \Cbb [t,t^{-1}]$ of degree $d-1$ with leading coefficient $e_0/(d-1)! $, such that $\Phi (l)=a_l$ for $l$ large enough.
\end{itemize}
Moreover $h(1)=e_0$.
 \end{theorem}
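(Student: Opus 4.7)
The plan is to prove the equivalence by explicit manipulation of the generating series $f(t)$, using the identity
$$\frac{1}{(1-t)^{k+1}} = \sum_{l \geq 0} \binom{l+k}{k} t^l$$
together with the elementary fact that the shifted binomial coefficients $\binom{l+k}{k}$, for $0 \leq k \leq d-1$, form a $\Cbb$-basis for the space of polynomials in $l$ of degree at most $d-1$. I will take this basis fact for granted and focus on the formal manipulations.

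For the direction $(1) \Rightarrow (2)$, I will write $h(t) = \sum_i h_i t^i$ as a finite Laurent sum and expand $f(t) = h(t)/(1-t)^d$ as a power series. Extracting the coefficient of $t^l$ yields
$$a_l = \sum_i h_i \binom{l-i+d-1}{d-1}$$
whenever $l$ exceeds the top exponent appearing in $h$. Each summand is a polynomial in $l$ of degree $d-1$ with leading coefficient $1/(d-1)!$, so the right-hand side defines a polynomial $\Phi(l)$ of degree at most $d-1$ whose leading coefficient is $\sum_i h_i/(d-1)! = h(1)/(d-1)!$. This gives (2) and already identifies $e_0 = h(1)$.

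For the converse $(2) \Rightarrow (1)$, I would decompose $\Phi(l) = \sum_{k=0}^{d-1} c_k \binom{l+k}{k}$ in the binomial basis; comparing top degrees in $l$ forces $c_{d-1} = e_0$. Choosing $L$ so that $a_l = \Phi(l)$ for all $l \geq L$, and applying
$$\sum_{l \geq L} \binom{l+k}{k} t^l = \frac{1}{(1-t)^{k+1}} - \sum_{l=0}^{L-1} \binom{l+k}{k} t^l,$$
one rewrites $f(t)$ as a Laurent polynomial $P(t)$ plus $\sum_{k=0}^{d-1} c_k/(1-t)^{k+1}$. Clearing denominators then gives $f(t) = h(t)/(1-t)^d$ with $h \in \Cbb[t,t^{-1}]$. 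To extract $h(1)$, I evaluate the Laurent-polynomial identity $h(t) = (1-t)^d P(t) + \sum_k c_k (1-t)^{d-k-1}$ at $t=1$: the first term vanishes (assuming $d \geq 1$), and among the remaining summands only $k = d-1$ contributes, yielding $h(1) = c_{d-1} = e_0$.

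The step I expect to require the most care is the bookkeeping in $(2) \Rightarrow (1)$: ensuring that the Laurent-polynomial corrections from the starting index $L$ combine cleanly and that the resulting numerator $h(t)$ genuinely lies in $\Cbb[t,t^{-1}]$ rather than merely in a completed ring of formal series. Apart from this, the argument is purely formal, the only substantive ingredient being the binomial basis for polynomials of bounded degree.
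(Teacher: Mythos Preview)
Your argument is correct and is the standard one for this well-known fact. Note, however, that the paper does not actually prove this theorem: it is introduced with ``Let recall the following fact'' and no proof is given, so there is no original argument to compare against. Your write-up would serve perfectly well as a supplied proof; the only cosmetic point is that in $(1)\Rightarrow(2)$ you obtain a polynomial of degree \emph{at most} $d-1$, which matches the statement precisely only when $h(1)\neq 0$ --- but the theorem as stated tacitly assumes this (otherwise ``degree $d-1$ with leading coefficient $e_0/(d-1)!$'' is vacuous), so no change is needed.
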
 
 Euler defined the Euler polynomials $A_n(t)$  by the following equality:
 $$\sum_{l\geq 0 }(l+1)^nt^l= \frac{A_n(t)}{(1-t)^{n+1}},$$ we can deduce from it the following identity:
 $$\sum_{l\geq 1 }l^nt^l= \frac{tA_n(t)}{(1-t)^{n+1}},$$
 the Eulerian numbers $A(n,m)$ are the coefficients of $A_n(t)$, $A_n(t)=\sum_{m=0 }^n A(n,m)t^m$. It follows from Theorem \ref{h-vectorbyhilbert} that we have:
 $$\mbox {for  \ }m=0 ,...,n ; \ \ A(n,m)= \sum_{j=0}^{k }(-1)^j {{d}\choose{j}} (m+1-j)^n  .$$
 Now we give a statement that improves slightly \cite{di}[Theorem 5.1], we also give a new proof, 
 \begin{theorem}
 Fix two integers $d,n\in \Nbb^*$. Let $(a_l)_{l\in \Zbb}$ be a sequence of complex numbers, such that $a_l=0$ for  $l<<0$, set :
$$ f(t)= \sum_{l\in \Zbb}a_lt^l,\ \ f^{<n>}(t)= \sum_{l\in \Zbb}a_{nl}t^l.$$ If $\displaystyle f(t)=\frac{h(t)}{(1-t)^{d+1}}$ with $h(t)\in \Cbb [t,t^{-1}],h(1)\not=0$ then 
$\displaystyle  f^{<n>}(t)=\frac{h^{<n>}(t)}{(1-t)^{d+1}}$ with $h^{<n>}(t)\in \Cbb [t,t^{-1}]$. Moreover $$\lim_{n\rightarrow \infty }(d!/h(1) )\frac{h^{<n>}(t)}{n^d}= tA_d(t),$$
 where $A_d(t)$ is the Eulerian polynomial
 
\end{theorem}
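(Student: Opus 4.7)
The plan is to pass to the sequence level via Theorem \ref{hilbertfunction-hpolynomial}, compute the coefficients of $h^{<n>}(t)$ asymptotically using Lemma \ref{h-vectorbyhilbert}, and identify the resulting expression with the $h$-polynomial of the classical series $\sum_{l\geq 1} l^d t^l = tA_d(t)/(1-t)^{d+1}$.

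First, I would invoke Theorem \ref{hilbertfunction-hpolynomial}: the hypothesis $f(t)=h(t)/(1-t)^{d+1}$ together with $h(1)\neq 0$ produces a polynomial $\Phi(X)$ of degree exactly $d$, with leading coefficient $h(1)/d!$, such that $a_l=\Phi(l)$ for all $l$ sufficiently large. Setting $b_l:=a_{nl}$, for $l$ large we have $b_l=\Phi(nl)$, and, viewed as a polynomial in $l$, $\Phi(nl)$ has degree $d$ with leading coefficient $(h(1)/d!)\,n^d$. Running Theorem \ref{hilbertfunction-hpolynomial} in the reverse direction on the sequence $(b_l)$ yields the first assertion: $f^{<n>}(t)=h^{<n>}(t)/(1-t)^{d+1}$ with $h^{<n>}(t)\in\Cbb[t,t^{-1}]$ and $h^{<n>}(1)=n^d h(1)$.

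Next I would apply Lemma \ref{h-vectorbyhilbert} to $f^{<n>}$, which gives
$$h^{<n>}_k \;=\; \sum_{j=0}^{k} (-1)^j \binom{d+1}{j}\, a_{n(k-j)}.$$
For each $j<k$ the index $n(k-j)$ tends to infinity, so $a_{n(k-j)}=\Phi(n(k-j))$ for $n$ sufficiently large; the boundary term $j=k$ contributes the bounded constant $a_0$, which is negligible after dividing by $n^d$. Keeping only the leading monomial of $\Phi$ in each surviving term yields
$$\lim_{n\to\infty}\;\frac{d!}{h(1)}\cdot\frac{h^{<n>}_k}{n^d} \;=\; \sum_{j=0}^{k} (-1)^j \binom{d+1}{j}(k-j)^d.$$

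Finally, applying Lemma \ref{h-vectorbyhilbert} to the identity $\sum_{l\geq 1} l^d t^l=tA_d(t)/(1-t)^{d+1}$ shows that the right-hand side above is precisely the coefficient of $t^k$ in $tA_d(t)$, which gives coefficient-wise convergence. The main obstacle, and the only non-mechanical step, is to promote this coefficient-wise convergence to convergence in $\Cbb[t,t^{-1}]$; I would handle it by writing $f^{<n>}(t)=(a_0-\Phi(0))+\sum_{l\geq 0}\Phi(nl)\,t^l$ for $n$ large, and noting that any sum $\sum_{l\geq 0} P(l)\,t^l$ with $\deg P=d$ has the form (polynomial of degree $\leq d$)$/(1-t)^{d+1}$; this produces a uniform bound $\deg h^{<n>}(t)\leq d+1$, which upgrades the term-by-term limit to the desired equality of polynomials.
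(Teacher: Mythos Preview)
Your proof is correct and follows essentially the same route as the paper: both arguments pass through Theorem \ref{hilbertfunction-hpolynomial} to produce the polynomial $\Phi$, apply it again to the subsequence $(a_{nl})$ to obtain $h^{<n>}(t)$, then use Lemma \ref{h-vectorbyhilbert} to expand $h^{<n>}_k$ and extract the leading asymptotics in $n$, arriving at the known closed form for the Eulerian numbers. The one point where you go beyond the paper is your final paragraph on the uniform degree bound $\deg h^{<n>}(t)\leq d+1$; the paper establishes only the coefficient-wise limit and leaves the passage to convergence in $\Cbb[t,t^{-1}]$ implicit, whereas you make this step explicit.
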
 
\begin{proof} By Theorem \ref{hilbertfunction-hpolynomial}, there exists a polynomial $\Phi (t)$ of degree $d$ with leading coefficient $h(1)/d!$ 
such that $\Phi (l)=a_l $ for $l$ large enough, which implies obviously that $\Phi (nl)=a_{nl} $ for $l$ large enough, and by applying again 
Theorem \ref{hilbertfunction-hpolynomial} there exists a polynomial $h^{<n>}(t)\in \Cbb [t,t^{-1}]$ such that $\displaystyle f^{<n>}(t)=\frac{h^{<n>}(t)}{(1-t)^{d+1}}$.

Now we prove the second claim. Note that for $n$ large enough $a_{nl}=0,$ for all integer $l<0$. so that $f^{<n>}(t)= \sum_{l\in \Nbb}a_{nl}t^l$, hence  
by Lemma \ref{h-vectorbyhilbert}  we have $h^{<n>}_0=  a_0 $, and for all
$k\geq 1,$ $$h^{<n>}_k=\sum_{j=0}^{k }(-1)^j {{d+1}\choose{j}} a_{n(k-j)} =(-1)^k {{d+1}\choose{k}}a_0 + \sum_{j=0}^{k-1 }(-1)^j {{d+1}\choose{j}} a_{n(k-j)}.$$ 
For $n$ large enough and $k-j\not=0$ we have  $a_{n(k-j)}=\Phi (n(k-j))=h(1) (n(k-j))^d/d!+....,$ so that 
$$\frac{(d!/h(1) )h_k}{n^d}=\frac{(d!/h(1) )(-1)^k {{d+1}\choose{k}}a_0}{n^d} + \sum_{j=0}^{k-1 }\frac{(d!/h(1) )(-1)^j {{d+1}\choose{j}} h(1) (n(k-j))^d/d!+....} {n^d}.$$
By taking the limit when $n\rightarrow \infty$, we get $$\lim_{n\rightarrow \infty } \frac{(d!/h(1))h_k}{n^d}=
\sum_{j=0}^{k-1 }(-1)^j {{d+1}\choose{j}} (k-j)^d=A(d,k-1).$$
Our claim is over.
\end{proof}
\section{Segre embeddings}
\begin{definition}
 Let $\Mcal= \{0,...,b_1\}\times \{0,...,b_2\}\times...\times  \{0,...,b_n\}\subset \bn^n$. Let $\Jcal_\Mcal$ be   the kernel of the ring homomorphism 
$$f : k[T_v |v\in \Mcal] \longrightarrow  k[x_{1,0}, ...,x_{1,b_1},x_{2,0}, ...,x_{2,b_2},...,x_{n,0}, ...,x_{n,b_n}]$$ given by 
$$T_v\mapsto x_{1,v_1}x_{2,v_2}...x_{n,v_n}.$$  
In all this paper we will set $[{\bf b}]=[b_1,...,b_n]$. The projective toric variety defined by the homogeneous prime ideal $\Jcal_\Mcal\subset k[T_v |v\in \Mcal]$
 is the Segre embedding $\Sigma _{[{\bf b}]}$ of
 $\bp^{b_1}\times... \times \bp^{b_n} $ in  $\bp^{(b_1+1)\times (b_2+1)\times ...\times (b_n+1)-1}$. We denote the ideal $\Jcal_\Mcal\subset k[T_v |v\in \Mcal]$ by 
$\Jcal_{[{\bf b}]},$ and the ring $k[T_v |v\in \Mcal]/\Jcal_\Mcal\subset k[T_v |v\in \Mcal]$ by $R_{[{\bf b}]}.$
\end{definition}
We recall some known properties : 
\begin{enumerate}
\item $\dim \Sigma _{[{\bf b}]}=b_1+b_2+...+b_n$.
\item $\deg \Sigma _{[{\bf b}]}=\frac{(b_1+b_2+...+b_n)!}{b_1!b_2!...b_n!}$.
\item $H_{R_{[{\bf b}]}}(l)={{b_1+l}\choose{b_1}}\times {{b_2+l}\choose{b_2}}\times ... \times {{b_n+l}\choose{b_n}} $.
\end{enumerate}

Now by using the Hilbert function, we will prove a recurrence formula for the $h-$vector of $R_{[{\bf b}]}.$
 \begin{theorem}\label{poincarerecurrence} Let $i$ such that   $b_i>0$, we set ${[{\bf b- \varepsilon_i}]}:= {b_1,b_2,...,b_i-1,...,b_n}$ and let 
$$P_{R_{[{\bf b- \varepsilon_i}]}}(t)=\frac{h_0({[{\bf b- \varepsilon_i}]})+h_1({[{\bf b- \varepsilon_i}]})t+...+h_r({[{\bf b- \varepsilon_i}]})t^r }{(1-t)^{d-1}},$$ with
 $h_r({[{\bf b- \varepsilon_i}]})\not=0,$
 be the Hilbert-Poincaré series of 
$\Sigma _{[{\bf b- \varepsilon_i}]}$. Then we have 
$$P_{R_{[{\bf b}]}}(t)=\frac{{h}_0({[{\bf b}]})+{h}_1({[{\bf b}]})t+...+{h}_r({[{\bf b}]})t^r+ {h}_{r+1}({[{\bf b}]})t^{r+1} }{(1-t)^{d}},$$
where: $${h}_0({[{\bf b}]})=1, \ \  \mbox{for \ }k=1,...,r;\ \  h_k({[{\bf b}]})=\frac{ (d-1-b_i-(k-1))h_{k-1}({[{\bf b- \varepsilon_i}]})+(b_i+k)h_{k}({[{\bf b- \varepsilon_i}]}) }{b_i},$$ 
and $\displaystyle h_{r+1}({[{\bf b}]})=\frac{ (d-1-b_i-r)h_{r}({[{\bf b- \varepsilon_i}]}) }{b_i}$. Note that $\displaystyle h_{r+1}({[{\bf b}]})$  can be zero.
 If $\displaystyle h_{r+1}({[{\bf b}]})=0$ then $\displaystyle h_r({[{\bf b}]})=\frac{ h_{r-1}({[{\bf b- \varepsilon_i}]})+(b_i+r)h_{r}({[{\bf b- \varepsilon_i}]}) }{b_i}>0.$
\end{theorem}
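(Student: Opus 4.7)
The plan is to work directly from the explicit Hilbert function
$$H_{R_{[{\bf b}]}}(l)=\prod_{j=1}^{n}\binom{b_j+l}{b_j},$$
given as item (3) just before the theorem, and to translate the recursion on binomial coefficients into a recursion on the $h$-polynomial via generating series manipulation.

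First I would establish the elementary one-step relation between Hilbert functions. Using $\binom{b_i+l}{b_i}=\frac{b_i+l}{b_i}\binom{b_i-1+l}{b_i-1}$ one obtains
$$b_i\,H_{R_{[{\bf b}]}}(l)=(b_i+l)\,H_{R_{[{\bf b-\varepsilon_i}]}}(l).$$
Multiplying by $t^l$ and summing, and recognising that $\sum_{l}l\,H(l)t^{l}=t\cdot (\sum_l H(l)t^l)'$, this gives
$$b_i\,P_{R_{[{\bf b}]}}(t)=b_i\,P_{R_{[{\bf b-\varepsilon_i}]}}(t)+t\,\frac{d}{dt}P_{R_{[{\bf b-\varepsilon_i}]}}(t).$$
This is the only probabilistic/algebraic input needed; everything from here on is pure calculus of rational functions.

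Next I would substitute $P_{R_{[{\bf b-\varepsilon_i}]}}(t)=g(t)/(1-t)^{d-1}$ with $g(t)=\sum_{k=0}^{r}h_k({[{\bf b-\varepsilon_i}]})\,t^k$, and compute the derivative: using the quotient rule one gets
$$\frac{d}{dt}\frac{g(t)}{(1-t)^{d-1}}=\frac{(1-t)g'(t)+(d-1)g(t)}{(1-t)^{d}}.$$
Clearing denominators in the relation above yields
$$b_i\cdot\bigl[\,\text{numerator of }P_{R_{[{\bf b}]}}(t)\,\bigr]=b_i\,g(t)(1-t)+t(1-t)g'(t)+t(d-1)g(t),$$
and rearranging the right-hand side one obtains
$$b_i\,g(t)+t(d-1-b_i)g(t)+t(1-t)g'(t).$$
In particular the denominator drops from $(1-t)^{d-1}$ to $(1-t)^{d}$, confirming the denominator stated in the theorem and showing the numerator has degree at most $r+1$.

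Finally I would extract the coefficient of $t^k$ on both sides. A short bookkeeping of the three terms — noting that $t(1-t)g'(t)$ contributes $k\,h_k-(k-1)h_{k-1}$ to $t^k$ for $1\le k\le r$ and $-r\,h_r$ to $t^{r+1}$ — gives exactly
$$b_i\,h_k({[{\bf b}]})=(d-1-b_i-(k-1))\,h_{k-1}({[{\bf b-\varepsilon_i}]})+(b_i+k)\,h_{k}({[{\bf b-\varepsilon_i}]})$$
for $1\le k\le r$, and $b_i\,h_{r+1}({[{\bf b}]})=(d-1-b_i-r)\,h_{r}({[{\bf b-\varepsilon_i}]})$, while the constant term gives $\tilde h_0=h_0=1$. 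For the last assertion, the vanishing $h_{r+1}({[{\bf b}]})=0$ forces $d-1-b_i-r=0$, so $d-1-b_i-(r-1)=1$, and substituting into the formula for $h_r({[{\bf b}]})$ yields the displayed identity; positivity follows because $h_{r-1}({[{\bf b-\varepsilon_i}]})\ge 0$ and $h_r({[{\bf b-\varepsilon_i}]})>0$ by hypothesis. The main obstacle is purely organisational: keeping the index ranges of the three pieces $b_i g(t)$, $t(d-1-b_i)g(t)$ and $t(1-t)g'(t)$ straight so that the endpoint coefficients (at $k=0$, $k=r$ and $k=r+1$) come out correctly; no conceptually new ingredient beyond the Hilbert-function identity and standard generating-series calculus is required.
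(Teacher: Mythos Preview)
Your proof is correct and follows essentially the same route as the paper: both start from the binomial identity $\binom{b_i+l}{b_i}=\frac{b_i+l}{b_i}\binom{b_i-1+l}{b_i-1}$ to get the relation $P_{R_{[{\bf b}]}}(t)=P_{R_{[{\bf b-\varepsilon_i}]}}(t)+\frac{1}{b_i}\,t\,P'_{R_{[{\bf b-\varepsilon_i}]}}(t)$, then substitute the rational form and read off coefficients. You go further than the paper in spelling out the coefficient extraction and in treating the final positivity assertion (which the paper's proof does not address); the only point worth making explicit there is that $h_{r-1}([{\bf b-\varepsilon_i}])\ge 0$ relies on the nonnegativity of the $h$-vector, e.g.\ via Cohen--Macaulayness of $R_{[{\bf b-\varepsilon_i}]}$.
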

\begin{proof}
Let $b_i\geq 1$. Let remark that ${{b_i+l}\choose{b_i}}= {{b_i-1+l}\choose{b_i-1}}\times \frac {b_i+l} {b_i},$ so that 
$$H_{R_{[{\bf b}]}}(l)  = (H_{R_{[{\bf b- \varepsilon_i}]}}(l)) \times \frac {b_i+l} {b_i} ,$$
which implies that:
$$ P_{R_{[{\bf b}]}}(t)=\sum_{l\geq 0} (H_{R{[{\bf b- \varepsilon_i}]}}(l)) \times \frac {b_i+l} {b_i}t^l $$
$$ P_{R_{[{\bf b}]}}(t)=\sum_{l\geq 0} H_{R{[{\bf b- \varepsilon_i}]}}(l) t^l+
\frac {1} {b_i}\sum_{l\geq 0} l H_{R_{[{\bf b- \varepsilon_i}]}}(l) t^l, $$ so that

$$P_{R_{[{\bf b}]}}(t)=P_{R_{[{\bf b-\varepsilon _i}]}}(t)+ \frac {1} {b_i} t P'_{R_{[{\bf b-\varepsilon _i}]}}(t),$$ where 
$P'_{R_{[{\bf b-\varepsilon _i}]}}(t)$ is the derivative of $P_{R_{[{\bf b-\varepsilon _i}]}}(t).$\hfill\break 
Since $P_{R_{[{\bf b- \varepsilon_i}]}}(t)=\frac{h_0({[{\bf b- \varepsilon_i}]})+h_1({[{\bf b- \varepsilon_i}]})t+...+h_r({[{\bf b- \varepsilon_i}]})t^r }{(1-t)^{d-1}},$ 
 with $d=b_1+b_2+...+b_n+1$, we get :
$$P_{R_{[{\bf b}]}}(t)=  \frac{\sum_{k=0}^{r+1}\frac {1} {b_i}( (d-1-b_i-(k-1))h_{k-1}({[{\bf b- \varepsilon_i}]})+(b_i+k)h_{k}({[{\bf b- \varepsilon_i}]})  )t^k  }{(1-t)^{d}} ,$$
 the proof is over.
\end{proof}
\begin{remark} As we will see in Section \ref{Sorted}, the numbers $h_k({[{\bf b}]})$ coincide with the Simon Newcomb's numbers, denoted by historical reasons $A([{\bf b}],k)$.
\end{remark}
We summarize some known results on the Simon Newcomb numbers, proved  by probabilistic methods.  
\begin{proposition}\begin{enumerate}
\item (Dillon and Roselle \cite{dr}(1969))$\mbox {For  \ }k=0 ,...,r :   $
\begin{equation}\label{h-vector-hilbertfunction2bis} A([{\bf b}],k)= \sum_{j=0}^k (-1)^j {{d}\choose{j}} 
{{b_1+k-j}\choose{b_1}}\times {{b_2+k-j}\choose{b_2}}\times ... \times {{b_n+k-j}\choose{b_n}}   .\end{equation}
\item  \cite{flw}  We have  the following recursive formula. $\mbox {For  \ }k\geq 1  $:
$$A([{\bf b}],k-1)=$$ $$=\frac{(N-b_n-k+1)A([{\bf b-\varepsilon _n}],k-2)+(k+b_n) A([{\bf b-\varepsilon _n}],k-1)}{b_n}$$
where 
$N=b_1+...+b_n$, 
\end{enumerate}

\end{proposition}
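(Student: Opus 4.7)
The plan is to derive both parts of the proposition as direct translations of the algebraic results already proven earlier in the paper, once we invoke the identification $h_k([\mathbf{b}]) = A([\mathbf{b}], k)$ announced in the Remark following Theorem \ref{poincarerecurrence} (whose full justification is Section \ref{Sorted}). In other words, the whole proposition is essentially an unpacking of formula \eqref{h-vector-hilbertfunction2} and of Theorem \ref{poincarerecurrence}, specialized to the Segre ring $R_{[\mathbf{b}]}$.

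For part (1), the Dillon--Roselle formula, the idea is to apply the second lemma (equation \eqref{h-vector-hilbertfunction2}) to the graded ring $S/J = R_{[\mathbf{b}]}$. The Krull dimension is $d = b_1 + b_2 + \cdots + b_n + 1$ by property (1) above, and the Hilbert function is the product of binomials
\[
H_{R_{[\mathbf{b}]}}(l) \;=\; \binom{b_1+l}{b_1}\binom{b_2+l}{b_2}\cdots \binom{b_n+l}{b_n}
\]
by property (3). Substituting $a_{k-j} = H_{R_{[\mathbf{b}]}}(k-j)$ into formula \eqref{h-vector-hilbertfunction2} and writing $h_k = A([\mathbf{b}], k)$ reproduces exactly the stated equation \eqref{h-vector-hilbertfunction2bis}. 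There is nothing further to compute; the only non-trivial input is the identification of the $h$-vector with the Simon Newcomb numbers.

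For part (2), the recurrence, the plan is to apply Theorem \ref{poincarerecurrence} with the distinguished index $i = n$. Setting $N = b_1 + \cdots + b_n$ so that $d = N+1$, the quantity $d - 1 - b_n - (k-1)$ becomes $N - b_n - k + 1$, and the recurrence of Theorem \ref{poincarerecurrence} reads
\[
h_k([\mathbf{b}]) \;=\; \frac{(N - b_n - k + 1)\, h_{k-1}([\mathbf{b}-\varepsilon_n]) + (b_n + k)\, h_k([\mathbf{b}-\varepsilon_n])}{b_n}.
\]
Replacing each $h$ by the corresponding $A$ and shifting the index $k$ by one to match the statement yields exactly the claimed formula from \cite{flw}.

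The genuinely substantive step, as emphasized above, is not either of the two displayed identities but rather the identification $h_k([\mathbf{b}]) = A([\mathbf{b}], k)$ between the algebraically defined $h$-vector of the Segre ring and the combinatorially defined Simon Newcomb numbers; this is postponed to Section \ref{Sorted} and will presumably rely on constructing the square-free initial ideal of $\Jcal_{[\mathbf{b}]}$ and identifying the facets of its Stanley--Reisner complex with permutations of the multiset having a prescribed number of descents. Once that bijection is in hand, the present proposition is a one-line specialization of the two lemmas of Section \ref{section1} together with Theorem \ref{poincarerecurrence}, and no further combinatorial argument (in particular, no probabilistic argument) is needed to recover the results of Dillon--Roselle and \cite{flw}.
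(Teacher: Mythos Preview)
Your proof is correct and matches the paper's own argument essentially verbatim: the paper likewise says that part (1) follows immediately from equation \eqref{h-vector-hilbertfunction2} applied to $R_{[\mathbf{b}]}$, and that part (2) is precisely the statement of Theorem \ref{poincarerecurrence} (with $i=n$), both via the identification $h_k([\mathbf{b}])=A([\mathbf{b}],k)$ established in Section \ref{Sorted}. Your added remarks spelling out the Hilbert function, the value $d=N+1$, and the role of the forthcoming bijection are accurate elaborations of exactly the same route.
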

 The proof of 1.  follows immediately from the Equality \ref{h-vector-hilbertfunction2}. Claim  2. is the statement of  Theorem \ref{poincarerecurrence}.
 We pointed that in  \cite{flw} we can find the following recursive formula:
$$A([{\bf b}],k)=(N-l(N)-k+1)A([{\bf b-\varepsilon _n}],k-1)+(k+l(N)) A([{\bf b-\varepsilon _n}],k)$$
where 
$N=b_1+...+b_n$ and $l(N)=b_n$, and the  notation $A([{\bf b}],k)$ in \cite{flw} is in fact 
$A([{\bf b}],k-1)$ in our notations.
 As we can see this formula is wrong, we have found the right formula by reading carefully the proof in  \cite{flw}, this mistake motivates us to find another direct proof.
\begin{corollary}  Let $(h_0({[{\bf b}]}), h_1({[{\bf b}]}),..., h_{r_{[{\bf b}]}}({[{\bf b}]}))$ be the $h-$vector of the Hilbert-Poincaré series of 
$R_{[{\bf b}]}$, with $h_{r_{[{\bf b}]}}({[{\bf b}]})\not=0,$ and $b^*= \max\{ b_1,b_2,...,b_n\}$. Then we have 
\begin{enumerate}
\item $r_{{[{\bf b}]}}=(b_1+b_2+...+b_n)-b^*. $
\item  $h_{r_{[{\bf b}]}}({[{\bf b}]})={{b^*}\choose{b_1}}...{{b^*}\choose{b_n}}.$ In particular 
$h_{r_{[{\bf b}]}}({[{\bf b}]})=1$ if and only if $b_1=b_2=...=b_n$, in this case $r_{{[{\bf b}]}}=(n-1)b_1$. That means that
 $R_{[{\bf b}]}$
 is arithmetically Gorenstein if and only $b_1=b_2=...=b_n$. This was proved in  \cite{gw}.
\end{enumerate} 
\end{corollary}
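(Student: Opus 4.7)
\medskip

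\noindent
\textbf{Proof proposal.} The plan is to establish both (1) and (2) simultaneously by induction on $N=b_1+\cdots+b_n$, using the recurrence of Theorem~\ref{poincarerecurrence}. The base case is $n=1$ (equivalently $N=0$): $R_{[{\bf b}]}$ is then a polynomial ring, with $h$-polynomial equal to $1$, so $r_{[{\bf b}]}=0=b_1-b^*$ and $h_0=1=\binom{b_1}{b_1}$, confirming both parts.

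For the inductive step the key is to choose the index $i$ in Theorem~\ref{poincarerecurrence} so that reducing $b_i$ does not drop the maximum coordinate. If some $b_j<b^*$, take $i=j$; otherwise all coordinates equal $b^*$ and, for $n\ge 2$, any choice of $i$ still leaves some other coordinate attaining $b^*$. In either case $b^*([{\bf b}-\varepsilon_i])=b^*$, so by induction $r_{[{\bf b}-\varepsilon_i]}=(N-1)-b^*$. The prefactor in the last formula of Theorem~\ref{poincarerecurrence} then becomes
$$d-1-b_i-r_{[{\bf b}-\varepsilon_i]}=N-b_i-(N-1-b^*)=1+b^*-b_i\ge 1>0,$$
so $h_{r_{[{\bf b}-\varepsilon_i]}+1}([{\bf b}])\neq 0$. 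Hence $r_{[{\bf b}]}=r_{[{\bf b}-\varepsilon_i]}+1=N-b^*$, which is (1).

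For (2), substituting the inductive value $h_{r_{[{\bf b}-\varepsilon_i]}}([{\bf b}-\varepsilon_i])=\binom{b^*}{b_i-1}\prod_{j\neq i}\binom{b^*}{b_j}$ into the same formula gives
$$h_{r_{[{\bf b}]}}([{\bf b}])=\frac{1+b^*-b_i}{b_i}\binom{b^*}{b_i-1}\prod_{j\neq i}\binom{b^*}{b_j}=\prod_{j=1}^n\binom{b^*}{b_j},$$
where the final equality uses the elementary identity $\frac{b^*-b_i+1}{b_i}\binom{b^*}{b_i-1}=\binom{b^*}{b_i}$. For the Gorenstein characterization, $h_{r_{[{\bf b}]}}=1$ forces $\binom{b^*}{b_j}=1$ for every $j$; after absorbing the vacuous factors $\bp^0$ arising from any $b_j=0$, this requires $b_1=\cdots=b_n=b^*$, giving $r_{[{\bf b}]}=(n-1)b_1$, and the equivalence with the arithmetically Gorenstein property is the cited result of \cite{gw}.

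The only real obstacle is the degenerate configuration where every $b_j$ already equals $b^*$: no index with a strictly smaller entry is available to peel off, so the induction must proceed by reducing a maximum itself. The observation that for $n\ge 2$ at least one other coordinate still witnesses $b^*$ after the reduction handles this cleanly, and the remaining computation is straightforward bookkeeping driven by Theorem~\ref{poincarerecurrence}.
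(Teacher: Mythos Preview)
Your proof is correct and follows essentially the same route as the paper's: both argue by induction on $N=b_1+\cdots+b_n$ (the paper writes $d=N+1$, orders $b_1\ge\cdots\ge b_n$, and reduces $b_n$, which is exactly your ``choose $i$ so that $b^*$ is preserved''), then feed the inductive value of $h_{r_{[{\bf b}-\varepsilon_i]}}$ into the last formula of Theorem~\ref{poincarerecurrence} and simplify via $\frac{b^*-b_i+1}{b_i}\binom{b^*}{b_i-1}=\binom{b^*}{b_i}$. The parenthetical ``equivalently $N=0$'' in your base case is a slip---for $n=1$ one has $N=b_1$, not $0$---but this is harmless since you verify the $n=1$ case directly.
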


\begin{proof}    Dillon and Roselle \cite{dr} prove this statement. 
Our purpose is to prove the Corollary only by elementary calculations on the Hilbert-Poincaré series. The proof will be by induction on the sum 
 $d :=b_1+b_2+...+b_n+1$. The case $d=2$ occurs if and only if $n=1, b_1=1$, in this case we have that the $h-$vector is $(1)$, so the claim is true.
Assume that the claim is proved for a natural  number $d\geq 2$. Let $b_1,b_2,...,b_n$ a sequence of non zero natural numbers such that $d=b_1+b_2+...+b_n+1. $ 
Without loss of generality we can assume that $b_1\geq b_2\geq ...\geq b_n$.
By induction hypothesis we have that $r_{{[{\bf b- \varepsilon_n}]}}=d-2-b_1,$ and $h_{r_{[{\bf b- \varepsilon_n}]}}({[{\bf b- \varepsilon_n}]})={{b_1}\choose{b_1}}...{{b_1}\choose{b_n-1}}.$ 
From the Theorem \ref{poincarerecurrence} we have that $r_{{[{\bf b}]}}\leq r_{{[{\bf b- \varepsilon_n}]}}+1=d-1-b_1, $ and 
\begin{align} h_{d-1-b_1}({[{\bf b}]})& =\frac{ (d-1-b_n-r_{{[{\bf b- \varepsilon_n}]}})h_{r_{{[{\bf b- \varepsilon_n}]}}}({[{\bf b- \varepsilon_i}]}) }{b_n}\\ 
& = \frac{ (b_1-(b_n-1))h_{r_{{[{\bf b- \varepsilon_n}]}}}({[{\bf b- \varepsilon_i}]}) }{b_n}\\ 
& = \frac{ (b_1-(b_n-1)){{b_1}\choose{b_1}}...{{b_1}\choose{b_n-1}}) }{b_n}\\ 
& ={{b_1}\choose{b_1}}...{{b_1}\choose{b_n}}.
\end{align}
The proof is complete.
\end{proof}
\begin{remark}It is a general fact that $h_0{{[{\bf b}]}}=1, h_1{{[{\bf b}]}}=(b_1+1)\times ...\times (b_n+1)-(b_1+b_2+...,+b_n)-1$ and
 $h_1{{[{\bf b}]}}+h_2{{[{\bf b}]}}+...,+h_{r_{{[{\bf b}]}}}({{[{\bf b}]}})= \deg \Sigma _{{{[{\bf b}]}}}=\frac{(b_1+b_2+...+b_n)!}{b_1!b_2!...b_n!}.$
\end{remark}
 \begin{corollary} For $n\geq 3$, we can compute the $h-$polynomial in the cases where the regularity $r_{{{[{\bf b}]}}}$ is 2 or 3.
\begin{enumerate}
\item  The case $r_{{{[{\bf b}]}}}=2$, can arrive if and only if $n=3, b_1=1,b_2=1$ and $b:=b_3$ any.
the $h-$polynomial is $1+(3b+1)t+ b^2t^2.$
\item The case $r_{{{[{\bf b}]}}}=3$, can arrive if and only if $n=3, b_1=1,b_2=2$ and $b:=b_3\geq 2$, or $n=4, b_1=b_2=b_3=1$ and $b:=b_4\geq 1$.
\begin{itemize}
\item if $n=3, b_1=1,b_2=2$ and $b:=b_3\geq 2$, then $h_0=1, h_1=5b+2, h_2=\displaystyle\frac{7b^2+b}{2},h_3=\displaystyle\frac{b^2(b-1)}{2}$.
\item if $n=4, b_1=b_2=b_3=1$, and $b:=b_4\geq 1$, we have $h_0=1, h_1=7b+4, h_2= 6b^2+4b+1, h_3= b^3. $
 \end{itemize}
 \end{enumerate}
\end{corollary}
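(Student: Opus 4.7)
The plan is to combine the previous corollary (which gives $r_{[\mathbf{b}]}=(b_1+\cdots+b_n)-b^*$ and $h_{r_{[\mathbf{b}]}}([\mathbf{b}])=\binom{b^*}{b_1}\cdots\binom{b^*}{b_n}$) with the remark that $h_0([\mathbf{b}])=1$, $h_1([\mathbf{b}])=\prod_i(b_i+1)-(b_1+\cdots+b_n)-1$, and $\sum_{k=0}^{r_{[\mathbf{b}]}}h_k([\mathbf{b}])=\deg \Sigma_{[\mathbf{b}]}=\frac{(b_1+\cdots+b_n)!}{b_1!\cdots b_n!}$. First I would reduce to the ordered case, assuming without loss of generality that $b_1\leq b_2\leq\cdots\leq b_n$, so $b^*=b_n$ and the condition $r_{[\mathbf{b}]}=r$ becomes $b_1+\cdots+b_{n-1}=r$.

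For the classification step, since each $b_i\geq 1$, the equation $b_1+\cdots+b_{n-1}=2$ with $n\geq 3$ forces $n-1=2$ and $b_1=b_2=1$, giving case (1). For $r=3$, the equation $b_1+\cdots+b_{n-1}=3$ admits only $n-1=2$ with $(b_1,b_2)=(1,2)$ (and $b_3\geq 2$ to preserve the ordering), or $n-1=3$ with $b_1=b_2=b_3=1$ (and $b_4\geq 1$); the case $n-1\geq 4$ is excluded since four positive integers cannot sum to $3$. This yields the two subcases of (2).

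For the explicit $h$-vectors, in each case I would first read off $h_0=1$, then evaluate $h_1$ from the product formula, and $h_{r_{[\mathbf{b}]}}$ from the binomial-product formula of the previous corollary. For $r=2$ this already exhausts the vector: in case (1) one gets $h_1=2\cdot 2\cdot(b+1)-(b+2)-1=3b+1$ and $h_2=\binom{b}{1}\binom{b}{1}\binom{b}{b}=b^2$. For $r=3$ the entries $h_0,h_1,h_3$ are given directly, and the single remaining entry $h_2$ is recovered from $h_2=\deg\Sigma_{[\mathbf{b}]}-1-h_1-h_3$, using $\deg\Sigma_{[1,2,b]}=\tfrac{(b+1)(b+2)(b+3)}{2}$ in the first subcase and $\deg\Sigma_{[1,1,1,b]}=(b+1)(b+2)(b+3)$ in the second.

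The remaining task is purely arithmetic simplification of these polynomial expressions in $b$, which should reproduce the stated formulas $\tfrac{7b^2+b}{2}$ and $6b^2+4b+1$ respectively; there is no conceptual obstacle, only bookkeeping. A minor sanity check I would include is verifying that the computed $h_{r_{[\mathbf{b}]}}$ is nonzero in each case (as asserted by the previous corollary), so that the regularity is indeed equal to $r$ rather than smaller, which is automatic since $\binom{b}{1},\binom{b}{2}>0$ under the stated hypotheses $b\geq 1$ and $b\geq 2$ respectively.
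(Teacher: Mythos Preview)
Your proposal is correct and follows essentially the same approach as the paper, which simply says ``the proof follows immediately from the above remark.'' You have spelled out exactly what that remark entails: using the previous corollary to determine $r_{[\mathbf b]}$ and $h_{r_{[\mathbf b]}}$, and the remark to obtain $h_0$, $h_1$, and (via the degree formula) the remaining $h_2$ when $r=3$; the classification of the cases and the arithmetic checks are precisely the bookkeeping the paper omits.
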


The proof follows immediately from the above remark. 
\subsection{Sorted sets,  Grobner basis} \label{Sorted}

We  order the elements in $\bn^n$ by saying that $(v_1,...,v_n)\leq (w_1,...,w_n) $ if and only if $v_i\leq w_i$ for all $i=1,...,n$.
 Let $b_1,...,b_n\in \bn^* $ and $\Mcal= \{0,...,b_1\}\times \{0,...,b_2\}\times...\times  \{0,...,b_n\}\subset \bn^n$ with the induced order. For any $u,v\in \Mcal$ we define $U(v,w),V(v,w)$ by setting
$$ U(u,v)_i=\min \{u_i,v_i\}, V(u,v)_i=\max \{u_i,v_i\}.$$ $U(v,w),V(v,w)$ are also the unique elements in $\Mcal$ such that $U(v,w)\leq v,w, V(v,w)\geq v,w$ and $U(v,w)$ (resp. $V(v,w))$) is maximal, (resp. minimal) with respect this property.

  Let $S:=k[T_v |v\in \Mcal]$, for any vectors  $v_1,v_2,...,v_s\in \Mcal$, we will say that the monomial $T_{v_1}T_{v_2 }...T_{v_s }$ is sorted if $v_1\leq v_2\leq ...\leq v_s$.
 An unsorted monomial has at least two no comparable vectors, that is   any unsorted monomial has a quadratic factor unsorted monomial. For any unsorted monomial $T_vT_w$, there is a unique sorted monomial $T_{U(v,w)}T_{V(v,w)}$.

\begin{theorem}\label{grobnerbasis}
 Let $\Mcal= \{0,...,b_1\}\times \{0,...,b_2\}\times...\times  \{0,...,b_n\}\subset \bn^n$. Let $\Jcal_{[{\bf b}]}$ be   the kernel of the ring homomorphism 
$$f : k[T_v |v\in \Mcal] \longrightarrow  k[x_{1,0}, ...,x_{1,b_1},x_{2,0}, ...,x_{2,b_2},...,x_{n,0}, ...,x_{n,b_n}]$$ given by 
$$T_v\mapsto x_{1,v_1}x_{2,v_2}...x_{n,v_n}.$$ We will denote by $\Gcal_\Mcal$ the set of all  the nonzero  binomials
$${\underline {T_vT_w}}-T_{U(v,w)}T_{V(v,w)}.$$ Let denote by  $\prec_M$ any revlex order on $k[T_v |v\in \Mcal]$ compatible with the diagonal order, then $\Gcal_\Mcal$ 
is a reduced Groebner basis of $\Jcal_{[{\bf b}]}$, 
the underlined monomials being the leading terms.
\end{theorem}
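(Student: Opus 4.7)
The plan is to follow the standard Sturmfels strategy from \cite{stu}: show that $\Gcal_\Mcal \subseteq \Jcal_{[{\bf b}]}$, form the monomial ideal $I$ generated by the underlined leading terms, and then squeeze $H_{S/I}$ between the two sides of a surjection to force $I = {\rm in}_{\prec_M}(\Jcal_{[{\bf b}]})$.

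First I would verify that every binomial in $\Gcal_\Mcal$ belongs to $\Jcal_{[{\bf b}]}$: for each coordinate $i$ one has the multiset identity $\{v_i,w_i\}=\{U(v,w)_i,V(v,w)_i\}$, so $f(T_vT_w)=f(T_{U(v,w)}T_{V(v,w)})$. By definition, a revlex order compatible with the diagonal order makes the diagonal/sorted monomial $T_{U(v,w)}T_{V(v,w)}$ strictly smaller than $T_vT_w$ whenever $(v,w)$ is unsorted, so the underlined monomial is indeed the leading term. Let $I \subseteq S$ be the squarefree monomial ideal generated by $\{T_vT_w : (v,w) \text{ unsorted}\}$. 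Then $I \subseteq {\rm in}_{\prec_M}(\Jcal_{[{\bf b}]})$, which yields the surjection $S/I \twoheadrightarrow S/{\rm in}_{\prec_M}(\Jcal_{[{\bf b}]})$ and hence
\[
H_{S/I}(l) \;\geq\; H_{S/{\rm in}_{\prec_M}(\Jcal_{[{\bf b}]})}(l) \;=\; H_{R_{[{\bf b}]}}(l) \;=\; \prod_{i=1}^{n}\binom{b_i+l}{b_i}.
\]

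To obtain the reverse inequality I would count the standard monomials modulo $I$. A monomial $T_{v_1}\cdots T_{v_l}$ survives in $S/I$ if and only if no quadratic factor is unsorted; since two incomparable vectors in the product poset $\Mcal$ constitute an unsorted quadratic factor, this is equivalent to $v_1,\ldots,v_l$ forming a multichain in $\Mcal$. Such a multichain of length $l$ decomposes coordinate-by-coordinate into $n$ independent weakly increasing sequences of length $l$ with values in $\{0,\ldots,b_i\}$, counted by $\binom{b_i+l}{b_i}$. Thus $H_{S/I}(l)=\prod_{i=1}^{n}\binom{b_i+l}{b_i}$, equality of Hilbert functions forces $I={\rm in}_{\prec_M}(\Jcal_{[{\bf b}]})$, and $\Gcal_\Mcal$ is a Gr\"obner basis.

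For reducedness, each binomial is monic; the trailing monomial $T_{U(v,w)}T_{V(v,w)}$ is sorted, hence is not divisible by any generator of $I$; and the leading term $T_vT_w$ uniquely recovers the unordered pair $\{v,w\}$, so distinct binomials have distinct leading terms. The delicate technical point is the equivalence \emph{monomial is sorted $\Leftrightarrow$ every quadratic factor is sorted}, on which the entire Hilbert function count rests; the nontrivial direction amounts to the observation that the minimal obstruction to a finite subset of $\Mcal$ being a chain is always witnessed by a single incomparable pair, i.e.\ by a quadratic factor.
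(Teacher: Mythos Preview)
Your proof is correct, but it follows a genuinely different route from the paper's own argument.

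The paper does \emph{not} count Hilbert functions. Instead it first proves a sorting lemma: every monomial can be rewritten, modulo $\Gcal_\Mcal$, as a sorted monomial in finitely many steps, and any binomial in $\Jcal_{[{\bf b}]}$ that is a difference of two sorted monomials must vanish (because the smallest coordinate in each slot is forced). With this lemma in hand, the paper argues directly that every monomial in ${\rm in}_{\prec_\Mcal}(\Jcal_{[{\bf b}]})$ is unsorted: if a sorted $m_1$ were initial, there would be a sorted $m_2\neq m_1$ with $m_1-m_2\in\Jcal_{[{\bf b}]}$, contradicting the lemma. Hence every initial monomial is divisible by some leading term in $\Gcal_\Mcal$.

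Your approach instead computes $H_{S/I}(l)$ by identifying standard monomials modulo $I$ with multichains in $\Mcal$ and factoring coordinate-by-coordinate, then matches this against the known Hilbert function $H_{R_{[{\bf b}]}}(l)=\prod_i\binom{b_i+l}{b_i}$. This is the standard Sturmfels squeeze, and it is shorter precisely because it imports the Hilbert function of $R_{[{\bf b}]}$ as an external input. The paper's argument is more self-contained (it does not use the formula for $H_{R_{[{\bf b}]}}$) and delivers the sorting lemma as a byproduct; that lemma is what the paper later leans on to describe the faces of $\Delta_{\prec_\Mcal}$ as sorted subsets. Your version reaches the same conclusion about standard monomials, but packages it as a counting identity rather than as a normal-form statement.
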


\begin{lemma}Any monomial can be sorted modulo $\Gcal_\Mcal$ after a finite number of steps. A binomial in $\Jcal_{[{\bf b}]}$ which is the difference of two sorted monomials is zero.
\end{lemma}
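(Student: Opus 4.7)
The plan is to handle the two assertions in sequence, as they are essentially independent.

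For the first assertion, I would argue by well-foundedness of the term order $\prec_M$. Suppose $m = T_{v_1} T_{v_2} \cdots T_{v_s}$ is unsorted. Then there exist indices $j<k$ such that $v_j$ and $v_k$ are incomparable in the product order on $\Mcal$, so $T_{v_j} T_{v_k}$ is an unsorted quadratic factor of $m$. The corresponding element of $\Gcal_\Mcal$ is $T_{v_j}T_{v_k} - T_{U(v_j,v_k)}T_{V(v_j,v_k)}$, with the sorted term being strictly smaller than the leading term with respect to $\prec_M$ (this is exactly why the revlex order is chosen to be compatible with the diagonal order: the diagonal $T_v^2$ in the binomial expansion of the Segre relation is ranked above the off-diagonal products, which transfers to the sorted/unsorted dichotomy). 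Replacing $T_{v_j}T_{v_k}$ by $T_{U(v_j,v_k)}T_{V(v_j,v_k)}$ inside $m$ produces a new monomial $m' \prec_M m$. Since $\prec_M$ is a well-founded monomial order, any such chain of reductions terminates. A monomial which admits no further reduction has no unsorted quadratic factor, hence is itself sorted.

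For the second assertion, let $m = T_{v_1}\cdots T_{v_s}$ and $m' = T_{w_1}\cdots T_{w_s}$ be two sorted monomials with $v_1 \leq \cdots \leq v_s$ and $w_1 \leq \cdots \leq w_s$ in the product order, and assume $m - m' \in \Jcal_{[{\bf b}]}$. Applying $f$, this means
\[
\prod_{j=1}^{s} \prod_{i=1}^{n} x_{i,v_{j,i}} \;=\; \prod_{j=1}^{s} \prod_{i=1}^{n} x_{i,w_{j,i}}
\]
in the polynomial ring. Since the variables $x_{i,\ast}$ are algebraically independent and grouped by the first index $i$, unique factorization forces, for each fixed $i \in \{1,\dots,n\}$, the equality of multisets
\[
\{v_{1,i},\, v_{2,i},\, \ldots,\, v_{s,i}\} \;=\; \{w_{1,i},\, w_{2,i},\, \ldots,\, w_{s,i}\}.
\]
But both $m$ and $m'$ being sorted in the product order means precisely that, for every coordinate $i$, the two finite sequences $(v_{j,i})_{j=1}^{s}$ and $(w_{j,i})_{j=1}^{s}$ are weakly increasing. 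Two weakly increasing sequences which coincide as multisets must coincide term by term, so $v_{j,i}=w_{j,i}$ for all $j,i$; hence $v_j = w_j$ for every $j$, and $m = m'$, i.e.\ the binomial is zero.

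The only delicate point is the verification that the revlex order $\prec_M$ compatible with the diagonal order really does put $T_vT_w$ strictly above $T_{U(v,w)}T_{V(v,w)}$ for every incomparable pair $v,w$. This is the content of the compatibility assumption made in Theorem~\ref{grobnerbasis}: the ``diagonal'' pair $(U(v,w),V(v,w))$, which is the componentwise coordinate-sorted pair, is lower in $\prec_M$ than the original $(v,w)$. Granted this, the termination argument and the uniqueness of the sorted representative fit together and give both claims.
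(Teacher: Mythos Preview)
Your proof is correct. For the second assertion your argument is the same as the paper's, only phrased directly rather than by contradiction: both rest on the observation that membership in $\Jcal_{[{\bf b}]}$ forces, for each coordinate $i$, the multisets $\{(v_j)_i\}_j$ and $\{(w_j)_i\}_j$ to coincide, and that two weakly increasing sequences representing the same multiset agree termwise.

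For the first assertion the approaches genuinely differ. The paper does not invoke the term order at all; it argues by induction on the degree $s$. If some $v_1$ is $\leq$ every other $v_j$, the inductive hypothesis sorts $T_{v_2}\cdots T_{v_s}$ (and the result stays $\geq v_1$ coordinatewise, since the operations $U,V$ preserve lower bounds). If no such minimum exists, the paper manufactures one in $s-1$ quadratic steps by iterated $U$: set $v^{(1)}=U(v_1,v_2)$, $v^{(k)}=U(v^{(k-1)},v_{k+1})$, and so on; after this the previous case applies. Your argument instead uses that each sorting step strictly decreases the monomial in the well-order $\prec_M$, so any chain of reductions terminates. Your route is shorter and is exactly the standard Gr\"obner-basis reasoning; the paper's route is order-free and yields an explicit bound on the number of reduction steps. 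The one caveat is that your termination argument presupposes that $\prec_M$ ranks $T_vT_w$ strictly above $T_{U(v,w)}T_{V(v,w)}$ for every incomparable pair; you rightly flag this and defer it to the compatibility hypothesis in Theorem~\ref{grobnerbasis}, which is acceptable, though your parenthetical gloss about ``the diagonal $T_v^2$'' is not really the correct picture of what ``compatible with the diagonal order'' means in this setting.
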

 \begin{proof}
By sorting a quadratic monomial we mean to replace it modulo $\Gcal_\Mcal$, that is $T_{v}T_{w}$ is sorted by
$T_{U({v},{w})}T_{V({v},{w})}$. We prove the lemma by induction on the degree of the monomial. Let $T_{v_1}T_{v_2 }...T_{v_s }$ be a monomial.
 It is clear that our assertion is true for $s=1$ or $s=2$. Let $s\geq 3$ and we suppose that $T_{v_1}T_{v_2 }...T_{v_s }$ is unsorted. 
If there exists some vector $v_1$ (after possible re-indexing)such that  ${v_1}\leq  {v_j }, \forall j\geq 2$, 
then by induction hypothesis the monomial $T_{v_1}T_{v_2 }...T_{v_s }$ can be sorted. So we suppose that   $T_{v_1}T_{v_{2}}$ is unsorted, 
set $v^{(1)}=U({v_1},{v_{2}}), v'_{2}=V({v_1},{v_{2}})$.  Set $v^{(k)}=U({v^{(k-1)}},{v_{k+1}}), v'_{k}=V({v^{(k-2)}},{v_{k}})$, for $k\geq 3$.
 The monomial  $T_{v^{(s-1)}}T_{v'_2 }...T_{v'_s }$ is may be unsorted but has the property that 
$ v^{(s-1)}\leq  {v'_j }, \forall j\geq 2$ and so by the above argument  $T_{v_1}T_{v_2 }...T_{v_s }$ is equivalent modulo $\Gcal_\Fcal$ to a  sorted monomial. 

Now let a binomial $ m_1-m_2:=T_{v_1}T_{v_2 }...T_{v_s }-T_{w_1}T_{w_2 }...T_{w_s } \in \Jcal_{[{\bf b}]}$, we prove that if $m_1, m_2$ are sorted then $m_1=m_2$.
 Suppose that there exist some $1\leq i\leq n$ such that $(v_{1})_i<(w_{1})_i$, then we will have that for any $k\geq 1$ $(v_{1})_i<(w_{k})_i$,
 this is a contradiction since $ m_1-m_2 \in \Jcal_{[{\bf b}]}$.
\end{proof}
 \begin{proof}{\bf Proof of Theorem \ref{grobnerbasis}}
 Now we consider any term order $\prec_\Mcal$ on $k[T_v |v\in \Mcal] $ compatible with the diagonal order and we take the reverse lexicographic order. 
It is clear that elements in $\Gcal_\Mcal$ are ordered by decreasing order. 
We prove now that $\Gcal_\Mcal$ is a Groebner basis of $\Jcal_{[{\bf b}]}$. By definition $\Gcal_\Mcal\subset \Jcal_{[{\bf b}]},$ 
 we take any monomial $m_1$ in $in_{\prec_\Mcal}(\Jcal_{[{\bf b}]})$, if $m_1$ is unsorted then it is a multiple of the initial term of some element in  $\Gcal_\Mcal$.
 If $m_1$ is sorted then there exists another monomial $m_2$, with $ m_1-m_2 \in \Jcal_{[{\bf b}]}$,  and $m_2$ is not in $in_{\prec_\Mcal}(\Jcal_{[{\bf b}]})$, 
  that is, $m_2$ is sorted, this implies that $m_1=m_2$ and proves our assertion.
\end{proof}
The following theorem is a consequence of Theorem \ref{grobnerbasis},  \cite{st}[chapter 8] and the primary decomposition of square free monomial ideals.
 \begin{theorem}\label{simplicial}
The toric ring $R_{[{\bf b}]}:=k[T_v |v\in \Mcal]/\Jcal_{[{\bf b}]} $ is a Cohen-Macaulay ring. We consider the Stanley Reisner ring associated to $in_{\prec_\Mcal}(\Jcal_{[{\bf b}]})$, 
and its simplicial complex $\Delta_{\prec_\Mcal}$.  A face of  $\Delta_{\prec_\Mcal}$ is the support of any sorted monomial.
 Every maximal face of  $\Delta_{\prec_\Mcal}$ is given by a maximal sorted subset of $\Mcal$  and so has cardinal  $b_1+...+b_n+1$.
 The primary decomposition of  $in_{\prec_\Mcal}(\Jcal_{[{\bf b}]})$ is an intersection of linear ideals of height
 $(b_1+1)\times ...\times (b_n+1)-(b_1+...+b_n+1)$. In particular $\dim R_{[{\bf b}]}=b_1+...+b_n+1$. 
   The Segre polyhedron $\Pcal_{[{\bf b}]}:= Convex(\Mcal)\subset \br^n$ has a unimodular triangulation. The normalized volume of $\Pcal_{[{\bf b}]}$
is equal to the number of sorted subsets of $\Mcal$ of cardinal  $b_1+...+b_n+1$. So we have:
$$Vol (\Pcal_{[{\bf b}]})={{b_1+...+b_n}\choose{b_1}}\times {{b_2+...+b_n}\choose{b_2}}\times ...\times 
{{b_{n-1}+b_n}\choose{b_{n-1}}}.$$
Note that $${{b_1+...+b_n}\choose{b_1}}\times {{b_2+...+b_n}\choose{b_2}}\times ...\times 
{{b_{n-1}+b_n}\choose{b_{n-1}}}= \frac{(b_1+...+b_n)!}{b_1!... b_n!}.$$
\end{theorem}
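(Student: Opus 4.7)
The plan is to read each conclusion off the shape of $in_{\prec_\Mcal}(\Jcal_{[{\bf b}]})$ given by Theorem \ref{grobnerbasis}, together with the standard dictionary between squarefree monomial ideals and simplicial complexes. By Theorem \ref{grobnerbasis}, $in_{\prec_\Mcal}(\Jcal_{[{\bf b}]})$ is generated by the squarefree quadratic monomials $T_v T_w$ with $v,w \in \Mcal$ incomparable in the coordinatewise order, so it is the Stanley--Reisner ideal of the simplicial complex $\Delta_{\prec_\Mcal}$ whose faces are exactly the totally ordered subsets of $\Mcal$ --- equivalently, the sorted subsets. This gives the face-theoretic statement immediately.

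I would then identify the facets. Since $(0,\ldots,0)$ and $(b_1,\ldots,b_n)$ are comparable to every element of $\Mcal$, every maximal sorted subset contains them both; such a subset is a saturated chain from $\hat 0$ to $\hat 1$ in the product poset $\prod_{i=1}^n \{0,\ldots,b_i\}$, which has rank $N:=b_1+\cdots+b_n$, and so has exactly $N+1$ elements. Hence $\dim \Delta_{\prec_\Mcal}=N$, and because a Gr\"obner degeneration preserves the Hilbert function we get $\dim R_{[{\bf b}]}=N+1$. Stanley--Reisner duality then yields the primary decomposition of $in_{\prec_\Mcal}(\Jcal_{[{\bf b}]})$ as the intersection, over the facets $F$, of the prime ideals generated by the variables $T_v$ with $v\in \Mcal\setminus F$; each such prime has height $|\Mcal|-(N+1)=(b_1+1)\cdots(b_n+1)-(N+1)$.

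For the Cohen--Macaulay assertion and the unimodularity of the triangulation I would invoke \cite{st}[Chapter 8]: since $in_{\prec_\Mcal}(\Jcal_{[{\bf b}]})$ is squarefree, the associated regular triangulation of $\operatorname{conv}(\Mcal)=\Pcal_{[{\bf b}]}$ is unimodular and the toric ring $R_{[{\bf b}]}$ is projectively normal, hence Cohen--Macaulay by Hochster's theorem. As a sanity check, $\Delta_{\prec_\Mcal}$ is the order complex of a finite distributive lattice (the product of chains) and is therefore shellable, which yields the same conclusion directly.

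The remaining item is the volume formula. Unimodularity says the normalized volume of $\Pcal_{[{\bf b}]}$ equals the number of maximal simplices in the triangulation, i.e.\ the number of maximal chains in $\prod_{i=1}^n \{0,\ldots,b_i\}$. Such a chain is a sequence of $N$ cover relations, each of which increases exactly one coordinate by $1$, and coordinate $i$ must be increased $b_i$ times in total; so maximal chains biject with words in the alphabet $\{1,\ldots,n\}$ containing letter $i$ exactly $b_i$ times. Their number is the multinomial $N!/(b_1!\cdots b_n!)$, which equals the claimed product $\binom{b_1+\cdots+b_n}{b_1}\binom{b_2+\cdots+b_n}{b_2}\cdots\binom{b_{n-1}+b_n}{b_{n-1}}$ by iterating $\binom{a+b}{a}=(a+b)!/(a!\,b!)$. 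The main obstacle is less the combinatorics --- essentially bookkeeping --- than matching the revlex-diagonal order of Theorem \ref{grobnerbasis} to the hypotheses of the Sturmfels package that links squarefree initial ideal, unimodular triangulation, projective normality, and Cohen--Macaulayness.
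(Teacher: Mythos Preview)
Your argument is correct and follows essentially the same route as the paper, which simply states that the theorem is a consequence of Theorem~\ref{grobnerbasis}, \cite{st}[chapter~8], and the primary decomposition of squarefree monomial ideals. You have spelled out the details --- identifying $\Delta_{\prec_\Mcal}$ as the order complex of the product of chains, reading off the facets as maximal chains of length $N+1$, and counting them via the multinomial coefficient --- whereas the paper leaves all of this implicit.
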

Theorem \ref{simplicial} says that any maximal face of $\Delta_{\prec_\Mcal}$ is given by a maximal sorted subset of $\Mcal$,  and  has cardinal  $b_1+...+b_n+1$. 
On the other hand any  maximal sorted subset  of $\Mcal$, can be viewed as a chain of points   of $\Mcal$ of length  $b_1+...+b_n+1$, or a oriented walk in    $\Mcal$,
 with starting point the point $D=(0,...,0)$ and endpoint,
 or arrival, the point $A=(b_1,...,b_n)$. 
Next let $\Scal$ be the set of the sequences $(u_l)_{1\leq l\leq b_1+ b_2+ ...+ b_n}$ of length $b_1+ b_2+ ...+ b_n$ with $b_i$ times the symbol $i$. 
Let $\varepsilon_1,...,\varepsilon_n$ be the canonical basis in $\bz^n$, so any maximal sorted subset  of $\Mcal$ can be identified with a sequence $(u_l)\in \Scal$, in the following way:

To any sequence $(u_l)\in \Scal$ we associate the set of points in $\Mcal$, $$\Fcal_{(u_l)}=\{D+\sum_{i=1}^m\sum_{j=1}^n \delta_{u_i,j} \varepsilon_j\mid 1\leq m\leq b_1+ b_2+ ...+ b_n\}.$$
By the above identification $\Scal$  is the set of all maximal faces of $\Delta_{\prec_\Mcal}$.

We say that we have a descent in the sequence $(u_l)$, if $u_i>u_{i+1}$ for some $i.$ Let 
$$\Gcal_{(u_l)}=\{D+\sum_{i=1}^m(\sum_{j=1}^n \delta_{u_i,j} \varepsilon_j)\mid 1\leq m\leq b_1+ b_2+ ...+ b_n, u_m>u_{m+1}  \},$$
$\Gcal_{(u_l)}$ is called the descent set of $\Fcal_{(u_l)}$ (or of $(u_l)$).
We say that the sequence $(u_l)$ pass through a set $G$ if $G\subset \Fcal_{(u_l)}$.

\begin{theorem}\label{partition}We have a partitioning of the   initial complex 
$$\Delta_{\prec_\Mcal}(\Jcal_{[{\bf b}]})=\bigcup_{(u_l)\in \Scal} [\Gcal_{(u_l)}, \Fcal_{(u_l)}] .$$ 
and the Hilbert-Poincar\'e
 series is given by:
 $$P_{R_{[{\bf b}]}}(t)=\frac{A_{[{\bf b}]}(t)}{(1-t)^{b_1+ b_2+ ...+ b_n+1}}.$$
where $$A_{[{\bf b}]}(t):= \sum_{(u_l)\in \Scal}t^{{\rm desc}(u_l)}=\sum_{k} A({[{\bf b}]},k) t^{k}$$ is also known as the multiset Eulerian polynomial,and 
$A({[{\bf b}]},k) $ counts the number of sequences in $\Scal$ with exactly $k$ descents.  

\end{theorem}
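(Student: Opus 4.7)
The proof splits naturally into a combinatorial part (the partition) and an algebraic part (extracting the Hilbert–Poincaré series). For the partition, I plan to exhibit, for every face $H$ of $\Delta_{\prec_\Mcal}$, a unique sequence $(u_l)\in \Scal$ such that $H\in[\Gcal_{(u_l)},\Fcal_{(u_l)}]$, by means of an explicit ``canonical filling'' procedure. Recall from Theorem~\ref{simplicial} that a face of $\Delta_{\prec_\Mcal}$ is a sorted chain in $\Mcal$, and a facet corresponds to a saturated chain from $D=(0,\ldots,0)$ to $A=(b_1,\ldots,b_n)$ using unit steps $\varepsilon_j$. Given a chain $H:\;D=v^{(0)}\le v^{(1)}\le\cdots\le v^{(s)}\le v^{(s+1)}=A$ (augmenting by the endpoints), write $v^{(j+1)}-v^{(j)}=(a_1^{(j)},\ldots,a_n^{(j)})\in\Nbb^n$ and fill each gap by emitting $a_1^{(j)}$ copies of the symbol $1$, then $a_2^{(j)}$ copies of $2$, \ldots, then $a_n^{(j)}$ copies of $n$. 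Concatenating over $j=0,1,\ldots,s$ produces a sequence $(u_l)\in\Scal$ whose path passes through every $v^{(j)}$, so $H\subseteq\Fcal_{(u_l)}$.

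Next I would check that $\Gcal_{(u_l)}\subseteq H$ and that this $(u_l)$ is the unique facet-index with that property. Inside a gap, the emitted sub-word is non-decreasing, so no descent can occur in the interior of a gap; the only possible descents are at the transitions between consecutive gaps, and the lattice point reached at such a transition is exactly $v^{(j+1)}\in H$. Hence $\Gcal_{(u_l)}\subseteq H\subseteq \Fcal_{(u_l)}$. For uniqueness, observe that any $(u'_l)$ with $H\subseteq\Fcal_{(u'_l)}$ and $\Gcal_{(u'_l)}\subseteq H$ must, inside each gap of $H$, have a descent-free sub-word -- but a descent-free word on $\{1,\ldots,n\}$ is forced to be $1^{a_1}2^{a_2}\cdots n^{a_n}$, which is exactly our canonical filling. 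This yields the asserted decomposition
\[
\Delta_{\prec_\Mcal}(\Jcal_{[\mathbf{b}]})=\bigsqcup_{(u_l)\in\Scal}[\Gcal_{(u_l)},\Fcal_{(u_l)}].
\]

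For the Hilbert–Poincaré series, I combine three ingredients already established. First, by Theorem~\ref{grobnerbasis} the initial ideal $\mathrm{in}_{\prec_\Mcal}(\Jcal_{[\mathbf{b}]})$ is squarefree, so the Stanley–Reisner ring $k[\Delta_{\prec_\Mcal}]$ coincides with $S/\mathrm{in}_{\prec_\Mcal}(\Jcal_{[\mathbf{b}]})$; since Gröbner degeneration preserves Hilbert series, it has the same Hilbert series as $R_{[\mathbf{b}]}$. Second, by Theorem~\ref{simplicial} this ring is Cohen–Macaulay of Krull dimension $b_1+\cdots+b_n+1$, and every facet of $\Delta_{\prec_\Mcal}$ has cardinality $b_1+\cdots+b_n+1$. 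Third, Stanley's partitionability formula says that whenever a Cohen–Macaulay simplicial complex $\Delta$ of dimension $d-1$ decomposes as $\bigsqcup_i [G_i,F_i]$ with each $F_i$ a facet, the $h$-vector satisfies $h_k(\Delta)=\#\{i:|G_i|=k\}$. Applied to our partition, $h_k=\#\{(u_l)\in\Scal:|\Gcal_{(u_l)}|=k\}=A([\mathbf{b}],k)$, which delivers the formula $P_{R_{[\mathbf{b}]}}(t)=A_{[\mathbf{b}]}(t)/(1-t)^{b_1+\cdots+b_n+1}$.

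The main obstacle is the uniqueness step of the partition: one must verify carefully that the descent points of the canonical filling land exactly at elements of $H$ (and nowhere else forced), and conversely that any other filling compatible with $H$ would introduce a descent strictly inside some gap. The rest -- the construction itself and the passage to the Hilbert series via Stanley's formula -- is essentially bookkeeping once the partition is in hand.
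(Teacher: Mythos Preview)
Your proposal is correct and follows essentially the same route as the paper: both arguments assign to each face the canonical filling obtained by bridging consecutive chain points (and the endpoints $D$, $A$) with the unique non-decreasing word $1^{a_1}2^{a_2}\cdots n^{a_n}$, observe that descents can occur only at the chain points, and then read off the $h$-vector from Stanley's partitionability formula (the paper cites \cite{st}[Proposition~III.2.3]). The only stylistic difference is in the uniqueness step: the paper sets up the filling inductively and then verifies, through a short case analysis on whether $P_1,P_2$ are descent points, that applying the construction to the descent set $\Gcal$ reproduces the same sequence; your direct argument (``a descent-free word with prescribed content is the sorted one, so the sub-word in each gap is forced'') reaches the same conclusion more transparently and avoids the case split.
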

\begin{proof}
The second assertion follows  from the first claim, indeed let $$\displaystyle A({[{\bf b}]},k)=\card \{ (u_l)\in \Scal \ |\ \card(\Gcal_{(u_l)})=k \}$$ then by application of 
\cite{st}[Proposition III.2.3], we have 
$$P_{R_{[{\bf b}]}}(t)=\frac{  \sum_{k} A({[{\bf b}]},k) t^{k}  }{(1-t)^{b_1+ b_2+ ...+ b_n+1}}.$$
The first assertion will follows from the next Lemma, where we define a surjective map 
$$h:\Delta_{\prec_\Mcal}\rightarrow \Scal,$$ so that 
$$\cup_{(u_k)\in\Scal} h^{-1}(u)$$ is the partition in the theorem.
\end{proof}
\begin{lemma} Let $G$ be any face of $\Delta_{\prec_\Mcal}$, that is a set of increasing points $P_1\leq P_1\leq  ... \leq P_s\in \Mcal$. 
There exist a well defined sequence $(u_{G,k})$ passing through $G$,  such that all the descents points in $(u_{G,k})$  are inside  $G$.
\end{lemma}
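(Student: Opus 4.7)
The plan is to construct $(u_{G,k})$ as the unique concatenation of weakly increasing walks joining the successive points of $G$, with the cube's corners adjoined as endpoints. I would set $P_0:=D=(0,\ldots,0)$ and $P_{s+1}:=A=(b_1,\ldots,b_n)$; since $G$ is a face of $\Delta_{\prec_\Mcal}$ its points form a chain in $\Mcal$, so $P_0\le P_1\le\cdots\le P_s\le P_{s+1}$ coordinate-wise. For $0\le i\le s$ write $P_{i+1}-P_i=(\delta_1^{(i)},\ldots,\delta_n^{(i)})\in\Nbb^n$ and define the $i$-th block to consist of $\delta_1^{(i)}$ letters $1$, then $\delta_2^{(i)}$ letters $2$, \ldots, then $\delta_n^{(i)}$ letters $n$. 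Concatenating these $s+1$ blocks in order yields a word $(u_{G,k})$ of length $b_1+\cdots+b_n$ with exactly $b_j$ copies of each symbol $j$, hence an element of $\Scal$.

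Verification of the two properties claimed in the lemma is then short. The partial sum of the displacement vectors of the first $i$ blocks telescopes to $P_i-P_0=P_i$, so $(u_{G,k})$ passes through every $P_i\in G$. Each block is weakly increasing by construction, so no descent of $(u_{G,k})$ can occur in the interior of a block; the only candidate descent positions are the boundaries between consecutive blocks, and these are exactly the points $P_1,\ldots,P_s$ of $G$. Hence $\Gcal_{(u_{G,k})}\subseteq G$.

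To justify the word \emph{well defined}, I would establish uniqueness by running the construction in reverse. Any $(u_l)\in\Scal$ passing through $G$ breaks into $s+1$ sub-words according to the visits at $P_0,\ldots,P_{s+1}$, and the multiset of letters of the $i$-th sub-word is forced to equal $(\delta_1^{(i)},\ldots,\delta_n^{(i)})$. The hypothesis that every descent of $(u_l)$ lies in $G$ forbids internal descents inside each sub-word, hence each sub-word is weakly increasing, and a weakly increasing word with prescribed letter multiplicities is unique. Therefore $(u_l)=(u_{G,k})$.

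I expect this last rigidity step to be the only real point in the argument; everything else is telescoping and bookkeeping. Once it is in place, $G\mapsto(u_{G,k})$ is the well-defined map $h\colon\Delta_{\prec_\Mcal}\to\Scal$ used in the proof of Theorem \ref{partition}, and its fibers $h^{-1}((u_l))$ are visibly the intervals $[\Gcal_{(u_l)},\Fcal_{(u_l)}]$, since a face $G'$ satisfies $h(G')=(u_l)$ exactly when $\Gcal_{(u_l)}\subseteq G'\subseteq\Fcal_{(u_l)}$.
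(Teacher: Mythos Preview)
Your proposal is correct, and the construction you give is exactly the paper's: both of you set $P_0=D$, $P_{s+1}=A$ and concatenate, for $i=0,\ldots,s$, the unique weakly increasing word with letter multiset $P_{i+1}-P_i$. The paper writes this out by an explicit piecewise formula and an induction on $s$, but it is the same sequence.

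Where you differ is in the justification of ``well defined.'' The paper proves, by an inductive case analysis on whether $P_1$ and $P_2$ are descent points, the idempotence relation
\[
(u_{\Gcal,k})=(u_{P_1,\ldots,P_s,k})\qquad\text{where }\Gcal=\Gcal_{(u_{P_1,\ldots,P_s,k})},
\]
which is precisely what is needed to see that the fibers of $h$ are the intervals $[\Gcal_{(u_l)},\Fcal_{(u_l)}]$. You instead prove outright uniqueness: any $(u_l)\in\Scal$ passing through $G$ with $\Gcal_{(u_l)}\subseteq G$ must, block by block, be the weakly increasing arrangement of the forced multiset $P_{i+1}-P_i$, hence equals $(u_{G,k})$. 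This rigidity statement is strictly stronger and immediately yields both $h(\Fcal_{(u_l)})=(u_l)$ and $h(G')=(u_l)\Leftrightarrow \Gcal_{(u_l)}\subseteq G'\subseteq \Fcal_{(u_l)}$, bypassing the paper's case split entirely. The trade-off is only expository: the paper's case analysis makes the inductive mechanism visible, while your argument is shorter and more conceptual.
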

We define $(u_{G,k})$ by induction on the cardinal of $G$.
If $G=\emptyset $ then we set $(u_{\emptyset ,k})$ the unique increasing sequence in $\Scal$.

Let $P=(v_1,...,v_n)\in \Mcal $ be any point, we associate to $P$ a sequence $(u_{P,k})\in \Scal$, defined by 
$$u_{P,k}=\begin{cases}1& \mbox{if  \ \  } 0\leq k\leq v_1,\cr
2& \mbox{if  \ \  }v_1< k\leq v_1+v_2,\cr
&....\cr
n& \mbox{if  \ \  } v_1+...+v_{n-1}< k\leq v_1+...+v_{n}=\mid P-D\mid,   \cr
1& \mbox{if  \ \  } v_1+...+v_{n}< k\leq b_1+v_2+...+v_{n},  \cr
2& \mbox{if  \ \  } b_1+v_2+...+v_{n}< k\leq b_1+b_2+v_3+...+v_{n},  \cr
&....\cr
n& \mbox{if  \ \  } b_1+...+b_{n-1}+v_{n}< k\leq b_1+b_2+...+b_{n}.  \cr
\end{cases}$$
Note that $(u_{\emptyset ,k})=(u_{D,k})=(u_{A,k}),$
 $(u_{P,k})$ has at most an descent point in $P$ and has not descent points if and only if 
$(u_{P,k})=(u_{\emptyset ,k}).$

For any two points $P=(u_1,...,u_n)\leq Q=(v_1,...,v_n)$ we associate the sequence $(u_{P,Q,k})\in \Scal$  defined by 
$$u_{P,Q,k}=\begin{cases} u_{P,k}& \mbox{if  \ \  } 0\leq k\leq \mid P-D\mid, \cr
1& \mbox{if  \ \  } \mid P-D\mid < k\leq \mid P-D\mid + (v_1-u_1),\cr
2& \mbox{if  \ \  } \mid P-D\mid + (v_1-u_1)< k\leq \mid P-D\mid + (v_1-u_1)+ (v_2-u_2),\cr
&....\cr
n& \mbox{if  \ \  } \mid P-D\mid + (v_1-u_1)+...+ (v_{n-1}-u_{n-1})< k\leq \mid Q-D\mid, \cr
u_{Q,k}& \mbox{if  \ \  }  \mid Q-D\mid< k\leq b_1+b_2+...+b_{n},\cr\end{cases}$$
where we have set $\mid P\mid=u_1+...+u_{n}$.

Let remark that $(u_{P,Q,k})$ has at most two descent points in $P$ or $Q$.

For any set of increasing points $P_1\leq P_2\leq ...\leq P_s$,  $(u_{P_1, P_2, ..., P_s,k})\in \Scal$  is defined by 
induction:
$$u_{P_1, P_2, ..., P_s,k}=\begin{cases}
u_{P_1,P_2,k}& \mbox{if  \ \  } 0\leq k\leq   \mid P_2-D\mid, \cr
u_{P_2, ..., P_s,k}& \mbox{if  \ \  }  \mid P_2-D\mid< k\leq b_1+b_2+...+b_{n}.\cr\end{cases}$$
It follows from the construction that $(u_{P_1, P_2, ..., P_s,k})$ pass through  $P_1, P_2, ..., P_s$ and all descent points of  $(u_{P_1, P_2, ..., P_s,k})$ are among $P_1, P_2, ..., P_s$.

Now we prove by induction on $s$ that if $\Gcal$ is the descent set of $(u_{P_1, P_2, ..., P_s,k})$ then $(u_{\Gcal,k})$=$(u_{P_1, P_2, ..., P_s,k})$. The case $s=0$ is clear, 
 so let $s\geq 2$, and  $\Gcal'$ 
be the descent set of $(u_{ P_2, ..., P_s,k})$ then $(u_{\Gcal',k})=(u_{ P_2, ..., P_s,k})$,
we have to consider several cases:
\begin{enumerate}
\item $P_2\not\in \Gcal'$, this implies that $P_2\not\in \Gcal$, so have two subcases:
\begin{enumerate}
\item $P_1 \in  (u_{P_2,k}),$ in this case $\Gcal= \Gcal', (u_{ P_1, ..., P_s,k})=(u_{\Gcal',k})$
\item $P_1\not\in (u_{P_2,k}),$ in this case $\Gcal=\{P_1\}\cup  \Gcal', (u_{ P_1, ..., P_s,k})=(u_{\Gcal,k})$
\end{enumerate}

\item $P_2\in \Gcal'$,  have three subcases:
\begin{enumerate}
\item $P_1\in (u_{P_2,k}),$ in this case $\Gcal= \Gcal', (u_{ P_1, ..., P_s,k})=(u_{\Gcal',k})$
\item $P_1\not\in (u_{P_2,k}),$ $p_2\not\in \Gcal,$ then  $\Gcal=\{P_1\}\cup  (\Gcal'\setminus \{P_2\}) ,$ and $ (u_{ P_1, ..., P_s,k})=(u_{\Gcal,k})$
\item $P_1\not\in (u_{P_2,k}),$ $p_2\in \Gcal,$ then  $\Gcal=\{P_1\}\cup  (\Gcal') ,$ and $ (u_{ P_1, ..., P_s,k})=(u_{\Gcal,k})$.
\end{enumerate}
\end{enumerate}
The following corollary gives the Simon Newcomb's numbers as a sum of positive integers, so solves the original question of Simon Newcomb.
\begin{corollary}{\bf  Simon Newcomb's Conjecture.} For $k =0,..., b_{1}+...+b_{n}-\max \{b_{1},...,b_{n}\}$, we have 
$$A({[{\bf b}]},k)=\sum_{(i_2,...,i_{n-1})\in \Delta } A_{i_2}  A_{i_2,i_3}A_{i_3,i_4}... A_{i_{n-1},i_{n}},$$
where $$i_{n}:=k; A_{i_2}={ {b_1 }\choose{i_2}}{ {b_2}\choose {i_2}}; \  \forall  s= 2,...,n-1,\  A_{i_s,i_{s+1}}={ {b_1+...+b_{s}-i_{s}}\choose{i_{s+1}-i_{s}}}{ {b_{s+1}+i_{s}}\choose { i_{s+1}}}\geq 0$$ 
 and $\Delta $ is defined by 
\begin{eqnarray*}0\leq   &i_{2}&\leq \min \{ b_{1}+b_{2}-\max \{b_{1},b_{2}\}, i_{3}  \},\\
 0\leq   &i_{3}&\leq \min \{ b_{1}+b_{2}+b_3-\max \{b_{1},b_{2},b_3\}, i_{4}  \}, \\ ...,\\
 0\leq   &i_{n-1}&\leq \min \{ b_{1}+...+b_{n-1}-\max \{b_{1},...,b_{n-1}\}, i_{n}  \}\end{eqnarray*}

\end{corollary}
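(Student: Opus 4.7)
The plan is to prove the formula by induction on $n$, viewing a permutation of the multiset $\{1^{b_1},\ldots,n^{b_n}\}$ as built up one letter-type at a time: start from the trivial permutation $1^{b_1}$ and, at step $s$, insert the $b_{s+1}$ copies of the new largest symbol $s+1$ into an existing permutation of $\{1^{b_1},\ldots,s^{b_s}\}$. The summation variable $i_s$ in the claimed formula will be interpreted as the descent count after step $s$, and the factor $A_{i_s,i_{s+1}}$ will count the insertion patterns that turn a fixed permutation with $i_s$ descents into one with $i_{s+1}$ descents.

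The key lemma I need is the one-step recurrence
$$A([b_1,\ldots,b_{s+1}],i_{s+1}) \;=\; \sum_{i_s=0}^{i_{s+1}} A([b_1,\ldots,b_s],i_s)\,\binom{N_s-i_s}{i_{s+1}-i_s}\binom{b_{s+1}+i_s}{i_{s+1}},$$
where $N_s=b_1+\cdots+b_s$. Iterating this recurrence from the trivial case $A([b_1],0)=1$ telescopes precisely into the product $A_{i_2}A_{i_2,i_3}\cdots A_{i_{n-1},i_n}$ of the statement, once one checks that the single-step weight evaluated at $s=1$, $i_1=0$ reduces to $\binom{b_1}{i_2}\binom{b_2}{i_2}=A_{i_2}$, and that for $s\ge 2$ it matches $A_{i_s,i_{s+1}}$ verbatim.

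To prove the one-step recurrence, fix a permutation $w$ of $\{1^{b_1},\ldots,s^{b_s}\}$ with $i_s$ descents and consider the $N_s+1$ insertion gaps (the two boundaries and the $N_s-1$ internal slots). Since $s+1$ is strictly larger than every existing letter, a direct inspection of the four cases — internal ascent gap, internal descent gap, left boundary, right boundary — shows that inserting any positive number of copies of $s+1$ into a gap changes the descent count by $+1$ at \emph{descent-creating} gaps (the $N_s-1-i_s$ ascent gaps together with the left boundary, totalling $N_s-i_s$) and by $0$ at \emph{descent-preserving} gaps (the $i_s$ descent gaps, whose descent is absorbed and re-created at the right of the inserted block, together with the right boundary, totalling $i_s+1$). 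To reach $i_{s+1}=i_s+\Delta$ descents one then selects $\Delta$ of the $N_s-i_s$ descent-creating gaps to be used, giving $\binom{N_s-i_s}{\Delta}$ choices, places one mandatory copy in each, and distributes the remaining $b_{s+1}-\Delta$ copies freely among those $\Delta$ active gaps together with the $i_s+1$ descent-preserving gaps; by stars and bars this contributes $\binom{b_{s+1}+i_s}{i_s+\Delta}=\binom{b_{s+1}+i_s}{i_{s+1}}$. The product is exactly $A_{i_s,i_{s+1}}$.

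Since each factor is a binomial coefficient of non-negative integers, positivity of every summand — the content of Simon Newcomb's original question — is automatic, and the region $\Delta$ in the statement is simply the natural support of the summand, because $A([b_1,\ldots,b_s],i_s)$ vanishes outside $0\le i_s\le N_s-\max\{b_1,\ldots,b_s\}$. The main obstacle is the gap classification at the two boundaries: a careless count there shifts the binomial indices by one and destroys the formula, so the four-case verification of how the descent number changes under insertion must be carried out carefully. Once that is in place, the induction closes immediately.
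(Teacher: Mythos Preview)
Your argument is correct and gives a genuinely different proof from the paper's. Both routes establish the same one-step recurrence
\[
A([b_1,\ldots,b_{s+1}],i_{s+1})=\sum_{i_s}A([b_1,\ldots,b_s],i_s)\binom{N_s-i_s}{\,i_{s+1}-i_s\,}\binom{b_{s+1}+i_s}{\,i_{s+1}\,},
\]
but the paper derives it algebraically: Theorem~\ref{partition} identifies $A([\mathbf b],k)$ with the $h$-vector of the Segre ring $R_{[\mathbf b]}$, and then Theorem~\ref{t2} (the Segre/Hadamard transform formula for $h$-polynomials, quoted from \cite{fk},\cite{md}) is applied inductively, tensoring the Hilbert series of $R_{[b_1,\ldots,b_s]}$ with that of $\mathbb P^{b_{s+1}}$ at each step. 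Your proof bypasses both ingredients and works directly with the combinatorial definition of $A([\mathbf b],k)$ via multiset permutations, obtaining the recurrence by the insertion-and-gap argument. The combinatorial route is more elementary and self-contained, and it makes the positivity of each summand visible without any appeal to Hilbert series; the paper's route, on the other hand, exhibits the formula as a special case of a general identity for Segre products, so it would extend verbatim to other families of rings whose $h$-vectors one wishes to iterate. One small terminological point: your ``ascent gaps'' are really the \emph{non-descent} gaps $w_j\le w_{j+1}$, since equal adjacent letters occur in multiset permutations; your count $N_s-1-i_s$ is nonetheless correct, and the four-case verification you outline goes through unchanged.
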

The proof follows  from the theorem \ref{partition} and by induction on the next theorem (see \cite{fk},\cite{md}).
Consider  formal Laurent series
$$ \mathfrak a=\sum_{l\geqslant \sigma_a}a_lt^l, \sigma_\mathfrak a\in \fz, a_l\in \fc $$
such that 
$$ (*)\ \ \  \mathfrak a=\frac{h(\mathfrak a)(t)}{(1-t)^{d_\mathfrak a}}, \text{ \ for some } d_\mathfrak a\geqslant 0, h(\mathfrak a)(t)\in \fc[t,t^{-1}]. $$

\begin{theorem}\label{t2} Let $\mathfrak a, \mathfrak b $ be any formal power Laurent series satisfying property (*).
 Let $b_{\mathfrak a}=d_{\mathfrak a}-1\geqslant 0, b_{\mathfrak b}=d_{\mathfrak b}-1\geqslant 0$ and $\sigma $ be any 
of the numbers $\sigma_{\mathfrak a}, \sigma_{\mathfrak b}, \sigma_{(\mathfrak a\otimes \mathfrak b)}=\max(\sigma_{\mathfrak a}, \sigma_{\mathfrak b}),\min(\sigma_{\mathfrak a}, \sigma_{\mathfrak b}). $
 Then for any $n\in \fz$ 
$$ h_n(\mathfrak a\otimes \mathfrak b)=\sum_{i=\sigma}^{\infty}\sum_{j=\sigma}^{\infty}h_i(\mathfrak a)h_j(\mathfrak b){ {b_{\mathfrak a}+j-i}\choose{n-i}}{ {b_{\mathfrak b}+i-j}\choose {n-j}}. $$ 
\end{theorem}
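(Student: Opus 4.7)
The plan is to reduce Theorem~\ref{t2} to a single binomial identity via bilinearity of the Hadamard product $\otimes$. First I would expand each series in the ``elementary'' basis of shifted rational functions: write $\mathfrak{a}(t) = \sum_i h_i(\mathfrak{a})\,\mathfrak{a}_i(t)$ and $\mathfrak{b}(t) = \sum_j h_j(\mathfrak{b})\,\mathfrak{b}_j(t)$ where $\mathfrak{a}_i := t^i/(1-t)^{d_\mathfrak{a}}$ and $\mathfrak{b}_j := t^j/(1-t)^{d_\mathfrak{b}}$; these sums are finite because $h(\mathfrak{a})$ and $h(\mathfrak{b})$ are Laurent polynomials. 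By bilinearity of $\otimes$,
$$\mathfrak{a}\otimes \mathfrak{b} = \sum_{i,j} h_i(\mathfrak{a})\,h_j(\mathfrak{b})\,(\mathfrak{a}_i\otimes \mathfrak{b}_j).$$

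Next I would observe that the $t^l$-coefficient of $\mathfrak{a}_i\otimes \mathfrak{b}_j$ is $\binom{b_\mathfrak{a}+l-i}{b_\mathfrak{a}}\binom{b_\mathfrak{b}+l-j}{b_\mathfrak{b}}$, a polynomial in $l$ of degree $b_\mathfrak{a}+b_\mathfrak{b}$ for $l$ large enough. Theorem~\ref{hilbertfunction-hpolynomial} then forces each $\mathfrak{a}_i\otimes \mathfrak{b}_j$, and hence $\mathfrak{a}\otimes \mathfrak{b}$, to satisfy property~(*) with denominator $(1-t)^{d_\mathfrak{a}+d_\mathfrak{b}-1}$. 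Since the functional $h_n$ is linear on series with this common denominator, applying Lemma~\ref{h-vectorbyhilbert} with $d = d_\mathfrak{a}+d_\mathfrak{b}-1$ reduces the theorem to the binomial identity
$$\sum_{k\ge 0}(-1)^k\binom{b_\mathfrak{a}+b_\mathfrak{b}+1}{k}\binom{b_\mathfrak{a}+n-k-i}{b_\mathfrak{a}}\binom{b_\mathfrak{b}+n-k-j}{b_\mathfrak{b}} = \binom{b_\mathfrak{a}+j-i}{n-i}\binom{b_\mathfrak{b}+i-j}{n-j}. \qquad (\star)$$

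The main obstacle is proving $(\star)$. My plan is to argue via generating functions: using $\binom{b_\mathfrak{a}+m}{b_\mathfrak{a}} = [y^m](1-y)^{-d_\mathfrak{a}}$ and its analogue in $z$, and swapping the finite $k$-sum with coefficient extraction, the left-hand side of $(\star)$ becomes
$$[y^{n-i}z^{n-j}]\,(1-y)^{-d_\mathfrak{a}}(1-z)^{-d_\mathfrak{b}}(1-yz)^{d_\mathfrak{a}+d_\mathfrak{b}-1}.$$
Thus $(\star)$ is equivalent to the bivariate power series identity
$$(1-y)^{-d_\mathfrak{a}}(1-z)^{-d_\mathfrak{b}}(1-yz)^{d_\mathfrak{a}+d_\mathfrak{b}-1} = \sum_{p,q\ge 0} \binom{b_\mathfrak{a}+p-q}{p}\binom{b_\mathfrak{b}+q-p}{q}\,y^p z^q.$$
To establish this I would exploit the factorisation $1-yz = (1-y)+y(1-z) = (1-z)+z(1-y)$: setting $u := y(1-z)/(1-y)$ and $v := z(1-y)/(1-z)$ (so that $uv = yz$, $1+u = (1-yz)/(1-y)$ and $1+v = (1-yz)/(1-z)$), the left-hand side rewrites as $(1+u)^{b_\mathfrak{a}}(1+v)^{b_\mathfrak{b}+1}/(1-y)$. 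Expanding the two binomials in $u$ and $v$, re-expanding the resulting powers of $(1-y)$ and $(1-z)$ by the generalised binomial theorem, and collecting coefficients of $y^p z^q$ via iterated applications of Vandermonde--Chu convolution $\binom{x+y}{n} = \sum_r \binom{x}{r}\binom{y}{n-r}$ produces the claimed right-hand side.
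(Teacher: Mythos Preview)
The paper does not actually supply a proof of Theorem~\ref{t2}. Immediately before the statement it reads ``by induction on the next theorem (see \cite{fk},\cite{md})'', i.e.\ the result is quoted from the Fischer--Kubitzke and Morales--Dung references and used as a black box to deduce the Simon Newcomb corollary. So there is nothing in this paper to compare your argument against.

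That said, your plan is sound and is in fact close to how such identities are handled in the cited sources. The bilinear reduction to the ``atomic'' case $\mathfrak a_i\otimes\mathfrak b_j$ is the right first move, and the resulting problem is exactly to compute the $h$-polynomial of $\sum_{l}\binom{b_{\mathfrak a}+l-i}{b_{\mathfrak a}}\binom{b_{\mathfrak b}+l-j}{b_{\mathfrak b}}t^l$ over $(1-t)^{b_{\mathfrak a}+b_{\mathfrak b}+1}$. Your bivariate generating-function reformulation
\[
(1-y)^{-d_{\mathfrak a}}(1-z)^{-d_{\mathfrak b}}(1-yz)^{d_{\mathfrak a}+d_{\mathfrak b}-1}=\sum_{p,q\ge 0}\binom{b_{\mathfrak a}+p-q}{p}\binom{b_{\mathfrak b}+q-p}{q}\,y^p z^q
\]
is correct and is the crux of the matter; the substitution $u=y(1-z)/(1-y)$, $v=z(1-y)/(1-z)$ is a clean way to attack it. Two points deserve more care when you write this up. First, in passing from Lemma~\ref{h-vectorbyhilbert} to your identity~$(\star)$ you silently replaced the actual coefficients $a_{n-k}$ (which vanish for $n-k<\max(i,j)$) by the polynomial values $\binom{b_{\mathfrak a}+n-k-i}{b_{\mathfrak a}}\binom{b_{\mathfrak b}+n-k-j}{b_{\mathfrak b}}$; these can differ when $n-k-i<-b_{\mathfrak a}$ or $n-k-j<-b_{\mathfrak b}$, so you should argue that the finitely many extra terms (the $k$-sum is cut off at $k=b_{\mathfrak a}+b_{\mathfrak b}+1$ anyway) do not spoil the equality, or else shift so that $i=j=0$ and treat the general case by the obvious symmetry $\mathfrak a_i\otimes\mathfrak b_j=t^{\max(i,j)}\bigl(\mathfrak a_{i-\max(i,j)}\otimes\mathfrak b_{j-\max(i,j)}\bigr)$. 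Second, the last paragraph (``iterated Vandermonde--Chu'') is only a sketch; the bookkeeping after expanding $(1+u)^{b_{\mathfrak a}}(1+v)^{b_{\mathfrak b}+1}/(1-y)$ in $y,z$ is genuinely a two-variable computation and should be written out, since that is where the specific product $\binom{b_{\mathfrak a}+p-q}{p}\binom{b_{\mathfrak b}+q-p}{q}$ has to emerge.
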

\begin{remark} If $b_1=1,b_2=2,b_3=2$; 
the sequence of points $(0,0,0),(0,1,0),(0,2,0),(0,2,1),$ $ (0,2,2),(1,2,2)$ corresponds to the sequence $ 2 2 3 3 1$ and has one descent,
 so the situation in the 3-dimensional space is different from the situation in the plane.

 The sequence of points $(0,0,0),(0,0,1),(0,1,1),(0,2,1), (0,2,2),(1,2,2)$  corresponds to the sequence $3 2 2  3 1 $ has two descents
\end{remark}
\begin{remark} It follows from the above theorem and the fact that the Segre embedding is arithmetically Cohen-Macaulay, that $A({[{\bf b}]},0)=1$, $A({[{\bf b}]},1)$ is the codimension
of the Segre embedding, and the Castelnuovo-Mumford regularity $r_{[{\bf b}]}$ of $R_{[{\bf b}]}$ is the degree of 
$A_{[{\bf b}]}(t)$, that is the highest $k$ such that  $A({[{\bf b}]},k) \not=0.$
\end{remark}

Our Theorem \ref{partition} is important even in  some particular cases:
\begin{example}
The Gorenstein case. The ring $R_{[{\bf b}]}$ is Gorenstein if and only if $h_{r_{[{\bf b}]}}=1$, that is, if and only if $b_1=...=b_n$. 
This was proved in   \cite{gw}. In this case  the Castelnuovo-Mumford regularity of $R_{[{\bf b}]}$ is $b_1(n-1)$. The case $b_1=1$ is well known, 
$A([1,...,1],k)$ is the Eulerian number $A(n,k)$ which counts the number of permutation of $n$ elements  with $k$ descents. 
We have that the  Ehrhard function $A_{\Acal}(r)$   coincides with the Hilbert polynomial. Let $H(t)=\sum_0^{+\infty }A_{\Acal}(r)t^r$ be the Hilbert-Poincar\'e series. 
We have the following identity:
$$H(t)=\sum_{k=0}^{+\infty } (k+1)^n t^k=\displaystyle\frac{\sum_{k=0}^{n-1 } A(n,k)t^k}{(1-t)^{n+1}}.$$
It is also well known that
$$A(n,k) =kA(n-1,k)+ n A(n,k-1)$$
\end{example}

\begin{example}
Moreover in the case $b_1=...=b_n=1$, Theorem \ref{partition} give us the familiar unimodular triangularization of the unit hypercube in dimension $n$ into $n!$ simplices of volume one. 
For example in 
the case $[1,1,1]$,
 we are working on the ring $S=k[T_{i,j,k} \mid 0\leq i,j,k\leq 1]$. For the above term order the triangulation of the polyhedron 
$\Pcal_{[{\bf b}]}$, the initial complex $\Delta_{\prec_\Mcal}(\Jcal_{[{\bf b}]}) $ can
 be represented by the following hexagon (by omitting the point $(1,1,1))$, which is in all the facets)
\begin{center}
\includegraphics[height=3 in,width=3in]{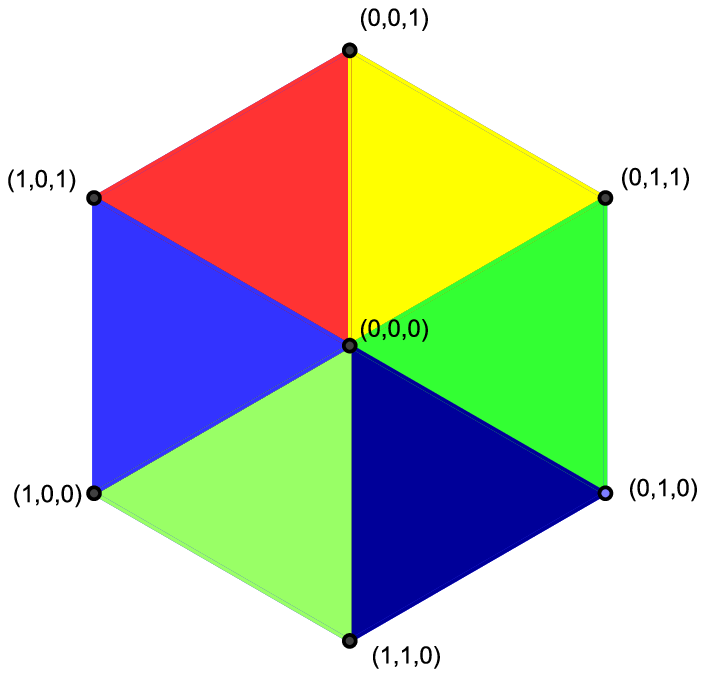}
\end{center}
Note that $in_{\prec_\Mcal}(\Jcal_{[{\bf b}]}) $ is generated by all the 9 diagonals in this picture.
The $h-$polynomial of $R_{[{\bf b}]}$ is $h(t)=1+4t+t^2$. Remark that any cyclic order on the edges gives the decomposition of 
$\Delta_{\prec_\Mcal}(\Jcal_{[{\bf b}]}) $ as a shellable complex.
\end{example}
\begin{example}
The case $[1,1,2]$.
In this case we are working on the ring $S=k[T_{i,j,k} \mid 0\leq i,j\leq 1, 0\leq k\leq 2]$. For the above term order 
the initial complex $\Delta_{\prec_\Mcal}(\Jcal_{[{\bf b}]}) $, thanks to Theorem \ref{partition},  
can be represented (by omitting the points $(0,0,0),(1,1,2)$, which are in all the facets) by
 \begin{center}
\includegraphics[height=3 in,width=5in]{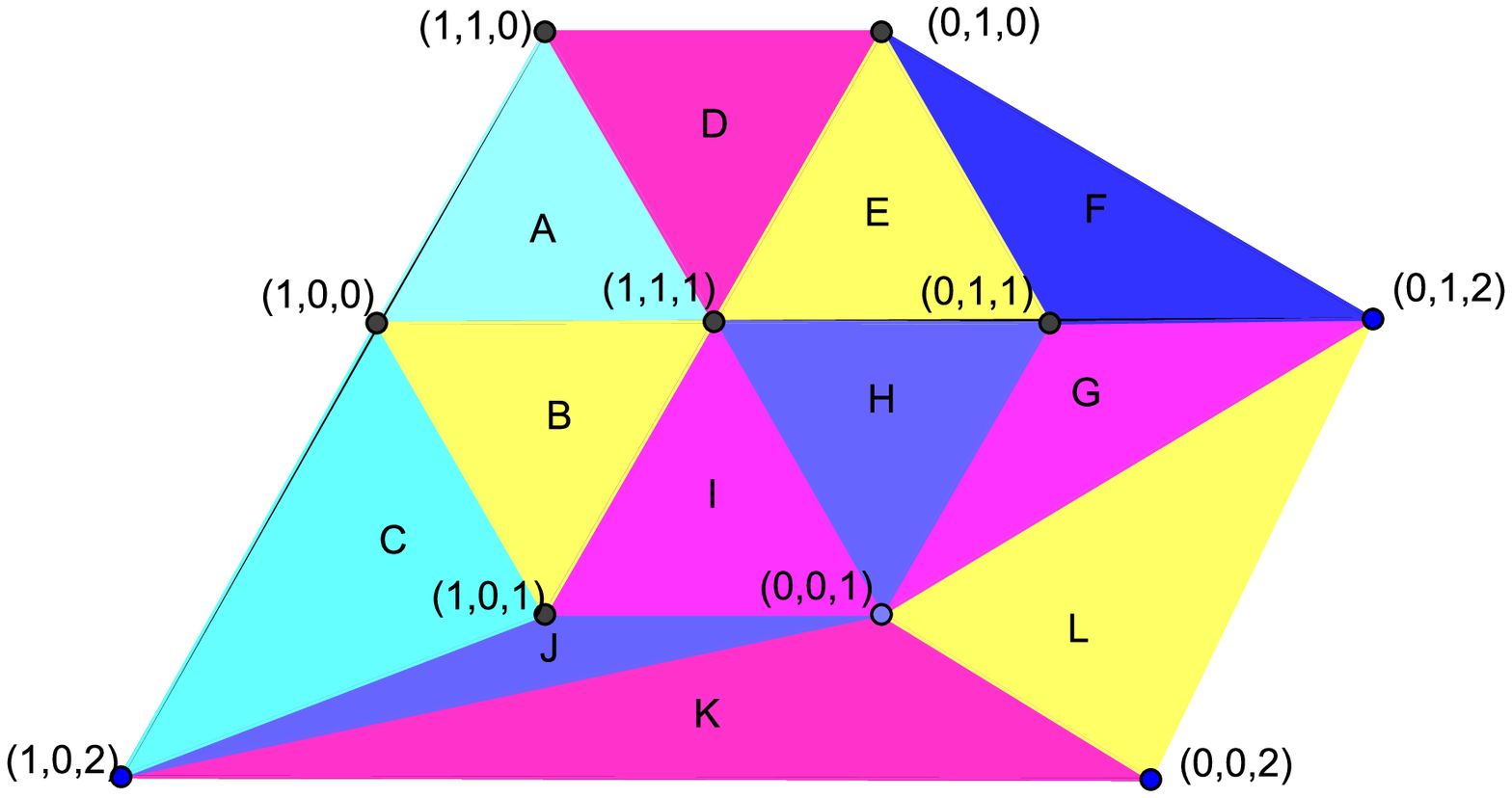}
\end{center}
The $h-$polynomial of $R_{[{\bf b}]}$ is $h(t)=1+7t+4t^2$. Remark that the  order on the facets ("triangles") by the lexicographic order gives the decomposition of 
$\Delta_{\prec_\Mcal}(\Jcal_{[{\bf b}]}) $ as a shellable complex. Note that $in_{\prec_\Mcal}(\Jcal_{[{\bf b}]}) $ is generated by all the 24 diagonals in this picture.

\end{example}
\subsection{Free Resolutions, Betti numbers}

Let $V$ be a set of variables and $S=K[V]$ the polynomial ring on the set of variables $V$ over a field $K$, graded by the standard graduation.
 Let $I\subset S$ a graded ideal of $S$. A graded minimal free resolution of $I$ is given by:
$$0\rightarrow \bigoplus_{\begin{array}{c}j=1 \end{array}}^{n_\rho} S(-a_{j,\rho})^{\beta_{\rho,a_{j,\rho}}}\rightarrow 
\dots\rightarrow \bigoplus_{\begin{array}{c}j=1 \end{array}}^{n_0} S(-a_{j,0})^{\beta_{0,a_{j,0}}}\rightarrow I \rightarrow 0,$$
where $a_{j,i}\in \Nbb$ and ${\beta_{i,a_{j,i}}}\in\Nbb^*$. The numbers ${\beta_{i,a_{j,i}}}$ are called graded 
 Betti numbers of $S/I$, sometimes to avoid any confusion we will denote by ${\beta_{i,a_{j,i}}}(I)$. Note that ${\beta_{i,a_{j,i}}}(I)= {\beta_{i+1,a_{j,i+1}}}(S/I).$
 
\begin{definition}\label{n2P-property}
Let $I\subset S$ be a homogeneous ($\Nbb$-graded standard) ideal of $S$ and $p\in\Nbb$, $p\neq 0$. 
We say that $I$ satisfies the ${\rm N}_{2,p}$ property if and only if for all $i\leq p-1$ and $j\geq 1$ $\beta_{i,i+2+j}(I)=0$.
\end{definition}
\begin{lemma} Let $p_2(I)$ the biggest integer such that $I$ satisfies the ${\rm N}_{2,p}$ property. Let 
$P_{S/I}(t)=\frac{1+h_1t+h_2t^2+h_3t^3+h_4t^4+...}{(1-t)^{d}}$ be the Hilbert-Poincaré series of $S/I$.
Then 
$${\rm for\ } i=1,...,p_2(I): \ \ \beta_{i,i+2}(I)= \sum_ {j=0}^{i+2} (-1)^{j+1}{{c }\choose{i+2-j}}h_j, $$

\end{lemma}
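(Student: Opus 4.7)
The plan is to compare two expressions for the Hilbert--Poincar\'e series of $S/I$: the one coming from the minimal graded free resolution, and the given one $h(t)/(1-t)^d$. Let $n=|V|$ denote the number of variables in $S$, so that $P_S(t)=1/(1-t)^n$, and write $c=n-d$ (the codimension of $S/I$). From the minimal free resolution of $S/I$, the additivity of Hilbert series along exact sequences gives
\begin{equation*}
P_{S/I}(t)=\frac{K(t)}{(1-t)^n},\qquad K(t)=\sum_{k,j}(-1)^k\beta_{k,j}(S/I)\,t^j.
\end{equation*}
Equating this with $P_{S/I}(t)=h(t)/(1-t)^d$ yields the master identity $K(t)=(1-t)^c h(t)$.

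Next I would expand the right-hand side. Using $(1-t)^c=\sum_{k\ge 0}(-1)^k\binom{c}{k}t^k$, the coefficient of $t^{i+2}$ in $(1-t)^c h(t)$ is
\begin{equation*}
[t^{i+2}]\,(1-t)^ch(t)=\sum_{j=0}^{i+2}(-1)^{i+2-j}\binom{c}{i+2-j}h_j.
\end{equation*}

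The core of the proof is to show that, under the ${\rm N}_{2,p}$ hypothesis with $1\le i\le p_2(I)=p$, the coefficient of $t^{i+2}$ in $K(t)$ collapses to a single Betti number, namely $(-1)^{i+1}\beta_{i+1,i+2}(S/I)$. For this I would argue as follows. The case $p\ge 1$ of ${\rm N}_{2,p}$ forces $\beta_{0,2+j}(I)=0$ for $j\ge 1$, so $I$ is generated in degree $2$; minimality of the resolution of $S/I$ then gives $\beta_{k,j}(S/I)=0$ whenever $k\ge 1$ and $j\le k$, which already kills all contributions with index $k\ge i+2$ (since they would require degree $j=i+2\le k$). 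For the remaining indices $1\le k\le i+1\le p+1$, the property ${\rm N}_{2,p}$ says $\beta_{k-1,(k-1)+2+j}(I)=0$ for $k-1\le p-1$ and $j\ge 1$, i.e.\ $\beta_k(S/I)=\beta_{k-1}(I)$ is concentrated in degree $k+1$; so $\beta_{k,i+2}(S/I)\ne 0$ only when $k+1=i+2$, which picks out $k=i+1$ alone. (When $i=p$ and $k=i+1=p+1$, ${\rm N}_{2,p}$ gives no restriction on higher components of $\beta_{p+1}(S/I)$, but those components sit in degrees $>p+2=i+2$ and so do not contribute to $[t^{i+2}]K(t)$.) Using $\beta_{i+1,i+2}(S/I)=\beta_{i,i+2}(I)$, this proves
\begin{equation*}
[t^{i+2}]K(t)=(-1)^{i+1}\beta_{i,i+2}(I).
\end{equation*}

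Finally, equating the two coefficients and multiplying through by $(-1)^{i+1}=(-1)^{-(i+1)}$, the identity $(-1)^{i+1}\cdot(-1)^{i+2-j}=(-1)^{j+1}$ transforms the sum into
\begin{equation*}
\beta_{i,i+2}(I)=\sum_{j=0}^{i+2}(-1)^{j+1}\binom{c}{i+2-j}h_j,
\end{equation*}
as claimed. The step I expect to require the most care is the vanishing argument above: one must verify that \emph{no} other $\beta_{k,i+2}(S/I)$ with $k\ne i+1$ contributes, combining minimality (to rule out $k\ge i+2$) with the ${\rm N}_{2,p}$ linearity window (to rule out $1\le k\le i$); this is exactly the place where the hypothesis $i\le p_2(I)$ is used in full.
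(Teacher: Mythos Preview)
Your proof is correct and follows the same approach as the paper: equate the resolution-based numerator $K(t)$ with $(1-t)^c h(t)$ and compare coefficients of $t^{i+2}$, using the ${\rm N}_{2,p}$ hypothesis to see that only $(-1)^{i+1}\beta_{i,i+2}(I)$ survives on the left. The paper merely writes down the identity $\frac{1-\beta_{0,2}t^2+\beta_{1,3}t^3-\cdots}{(1-t)^{d+c}}=\frac{h(t)(1-t)^c}{(1-t)^{d+c}}$ without further comment, so your version is a more detailed execution of the same idea; the one small slip is that ${\rm N}_{2,1}$ only gives generation in degree $\le 2$, not exactly $2$, but this is implicit in the paper as well.
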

\begin{proof}\label{betti-h} It follows from the following identity:
$$\frac{1-\beta_{0,2}t^2+\beta_{1,3}t^3+...+(-1)^{p_2(I)+1}\beta_{p_2(I),p_2(I)+2}t^{p_2(I)+2}+ t^{p_2(I)+3}(...)}{(1-t)^{d+c}}$$  $$=
\frac{(1+h_1t+h_2t^2+h_3t^3+...+h_{p_2(I)+2}t^{p_2(I)+2}+ ...)\times (1-t)^{c}}{(1-t)^{d+c}}$$
\end{proof}
As before we set ${[{\bf b}]}={[b_1,...,b_n]}$. Now we apply Theorem \ref{partition} to give some results on the Betti numbers of Segre rings $R_{[{\bf b}]}$. 
We set $c({[{\bf b}]})= (b_1+1)\times ...\times (b_n+1)- (b_1+...+b_n+1) $ be the codimension of $R_{[{\bf b}]}.$ The following Theorem and its proof will be very useful
 for our results in this section.
\begin{theorem}\label{rubeiN2p}\cite{Ru2}[Theorem 10]Let $n\geq 3$ and $b_1,...,b_n\in\Nbb^*$ then the ideal $\Jcal_{[{\bf b}]}$ satisfy ${\rm N}_{2,p}$ property if and only $p\leq 3$.
\end{theorem}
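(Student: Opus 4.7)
The plan is to prove the two implications separately, leveraging the Gröbner basis structure from Theorem \ref{grobnerbasis}. For the forward direction (that $N_{2,3}$ holds for all $n\geq 3$ and all $[\mathbf{b}]$), I would pass to the initial ideal. Since $\mathrm{in}_{\prec_M}(\mathcal{J}_{[\mathbf{b}]})$ is a squarefree quadratic monomial ideal, upper-semicontinuity of graded Betti numbers under Gröbner deformation gives $\beta_{i,j}(\mathcal{J}_{[\mathbf{b}]})\leq \beta_{i,j}(\mathrm{in}_{\prec_M}(\mathcal{J}_{[\mathbf{b}]}))$, so it suffices to verify $N_{2,3}$ for the Stanley--Reisner ideal of $\Delta_{\prec_M}$.

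The tool for that reduction is Hochster's formula, which expresses
\[
\beta_{i,i+2+j}(I_{\Delta_{\prec_M}})=\sum_{|W|=i+2+j}\dim_K \tilde H_{j-1}(\Delta_{\prec_M}|_W;K),
\]
where the sum runs over vertex subsets $W\subset \mathcal{M}$. Property $N_{2,2}$ becomes connectedness of every induced subcomplex on $3+j$ vertices ($j\geq 1$), and $N_{2,3}$ becomes additionally the vanishing of $\tilde H_1$ of every induced subcomplex on $4+j$ vertices. Both can be verified using the sorting rewriting system: every unsorted pair $T_vT_w$ reduces uniquely to the sorted pair $T_{U(v,w)}T_{V(v,w)}$, and this reduction is confluent (any unsorted monomial reaches a unique sorted monomial, as proved in the Lemma preceding Theorem \ref{grobnerbasis}). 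Confluence shows that the ``graph of unsortedness'' on any induced subcomplex is connected whenever nonempty, giving the $\tilde H_0$ vanishing, and the ``sorting square'' filled in by the two-step reductions $\{v,w,v',w'\}\mapsto\{U,V,U',V'\}$ provides enough 2-cells to kill $\tilde H_1$, giving the $N_{2,3}$ statement.

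For the reverse direction, I would exhibit an explicit non-linear minimal third syzygy already in the smallest case $n=3$, $b_1=b_2=b_3=1$. Here $R_{[1,1,1]}$ is the coordinate ring of the Segre threefold $\mathbb{P}^1\times\mathbb{P}^1\times\mathbb{P}^1\subset\mathbb{P}^7$, with $h$-polynomial $1+4t+t^2$ and codimension $c=4$. Applying Lemma \ref{betti-h} under the tentative assumption that $N_{2,4}$ holds gives $\beta_{3,5}=-\binom{4}{5}+h_1\binom{4}{4}-h_2\binom{4}{3}=0$, which in particular forces the total third Betti number to come entirely from degree $\geq 6$; a direct check (or an Eagon--Northcott-style comparison with the three embedded Veronese-like subideals) shows $\beta_{3,6}\neq 0$, contradicting $N_{2,4}$. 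For arbitrary $n\geq 3$ and $[\mathbf{b}]$, the projection onto any three positive coordinates gives a natural inclusion of $\mathbb{P}^1\times\mathbb{P}^1\times\mathbb{P}^1$ (by restricting to $v_i\in\{0,1\}$ on three chosen factors and fixing the others) whose ideal embeds into $\mathcal{J}_{[\mathbf{b}]}$, and the non-linear syzygy persists. The hard part is the confluence and 1-connectedness argument of the second paragraph: local sorting moves need to tile every potential 1-cycle in every induced subcomplex, and this combinatorial verification, rather than the counterexample, is the real content.
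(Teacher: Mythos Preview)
The paper does not give its own proof of this theorem: it is simply quoted from Rubei \cite{Ru2}, whose argument is representation-theoretic (Koszul cohomology and Schur functor decompositions over $\Cbb$), not combinatorial. So there is no in-paper proof to compare against, only the cited one.

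More importantly, your forward direction has a genuine gap. The reduction to the initial ideal cannot work, because $\ini_{\prec_\Mcal}(\Jcal_{[{\bf b}]})$ does \emph{not} satisfy ${\rm N}_{2,2}$, let alone ${\rm N}_{2,3}$. The complex $\Delta_{\prec_\Mcal}$ is the order complex of the product poset $\{0,\dots,b_1\}\times\cdots\times\{0,\dots,b_n\}$: its faces are chains, so the induced subcomplex on a vertex set $W$ is the comparability graph of $W$ (fleshed out to a flag complex). Already for $[{\bf b}]=[1,1,1]$ the four-element set
\[
W=\{(1,0,0),\,(1,0,1),\,(0,1,0),\,(0,1,1)\}
\]
has comparability graph consisting of two disjoint edges, hence $\tilde H_0(\Delta_{\prec_\Mcal}|_W)\neq 0$ and, by Hochster's formula, $\beta_{1,4}\bigl(\ini_{\prec_\Mcal}(\Jcal_{[1,1,1]})\bigr)\neq 0$. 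Thus the upper-semicontinuity inequality $\beta_{i,j}(\Jcal_{[{\bf b}]})\le\beta_{i,j}(\ini_{\prec_\Mcal}(\Jcal_{[{\bf b}]}))$ gives no information here. Your ``confluence'' heuristic conflates the sorting rewriting on \emph{monomials} with connectivity of induced subcomplexes on \emph{vertex} subsets; the sorting move $T_vT_w\mapsto T_{U(v,w)}T_{V(v,w)}$ produces new vertices $U(v,w),V(v,w)$ that typically lie outside $W$, so it says nothing about $\Delta_{\prec_\Mcal}|_W$.

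Your reverse direction is closer in spirit to what the paper actually uses downstream (Corollary~\ref{betti}): the Betti table of $\Jcal_{[1,1,1]}$ has $\beta_{3,6}=1\neq 0$, and this is then propagated to all larger $[{\bf b}]$. But note that the propagation step in the paper (Proposition~\ref{ru}) again relies on Rubei's Schur-functor argument over $\Cbb$, not on a naive ideal inclusion; your ``restrict three factors to $\{0,1\}$ and fix the others'' does give a ring retract, but one still has to argue that a minimal syzygy stays minimal after extension, which is exactly the content of the representation-theoretic monotonicity.
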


The following corollary is an immediate consequence of Lemma \ref{betti-h}. Note that Simon Newcomb's numbers $A({[{\bf b}]}),j)$ can be computed easily for $j\leq 5$ by the equality
(\ref{h-vector-hilbertfunction2}).
\begin{corollary} 
We have 
$$\beta_{0,2}(\Jcal_{[{\bf b}]})={{(b_1+1)\times ...\times (b_n+1)-1+2 }\choose{2}}- {{b_1+2 }\choose{2}}\times ...\times {{b_n+2 }\choose{2}},$$
and  $\beta_{1,3}(\Jcal_{[{\bf b}]}),\beta_{2,4}(\Jcal_{[{\bf b}]}),\beta_{3,5}(\Jcal_{[{\bf b}]})$ are given in terms of the Simon Newcomb's numbers by using the formula:
$${\rm for\ } i=1,2,3: \ \ \beta_{i,i+2}(\Jcal_{[{\bf b}]})= \sum_ {j=0}^{i+2} (-1)^{j+1}{{c({[{\bf b}]}) }\choose{i+2-j}}A({[{\bf b}]},j), $$
\end{corollary}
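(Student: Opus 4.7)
The plan is to assemble three results already in place: Theorem \ref{grobnerbasis}, which tells us that $\Jcal_{[{\bf b}]}$ has a quadratic Groebner basis and is therefore generated in degree $2$; Theorem \ref{partition}, which identifies the $h$-vector of $R_{[{\bf b}]}$ with the Simon Newcomb numbers; and Rubei's Theorem \ref{rubeiN2p} on the $N_{2,3}$ property. The computations are elementary, so the main task is just to deploy these ingredients in the right order.

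For the first formula, since $\Jcal_{[{\bf b}]}$ is generated in degree $2$ the number of minimal quadratic generators is simply $\dim_k (\Jcal_{[{\bf b}]})_2$. Taking the short exact sequence $0\to \Jcal_{[{\bf b}]}\to S\to R_{[{\bf b}]}\to 0$ in degree $2$ gives
$$\beta_{0,2}(\Jcal_{[{\bf b}]}) = \dim_k (\Jcal_{[{\bf b}]})_2 = \dim_k S_2 - H_{R_{[{\bf b}]}}(2),$$
where $S=k[T_v\mid v\in\Mcal]$ has $N:=(b_1+1)\cdots(b_n+1)$ variables. Property (3) listed at the start of Section 3 gives $H_{R_{[{\bf b}]}}(2)=\prod_i {{b_i+2}\choose{2}}$, while $\dim_k S_2={{N+1}\choose{2}}={{(b_1+1)\cdots(b_n+1)-1+2}\choose{2}}$, which is exactly the claimed closed form.

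For the formulas in the range $i=1,2,3$ I would invoke Rubei's Theorem \ref{rubeiN2p}: it yields $p_2(\Jcal_{[{\bf b}]})\geq 3$ whenever $n\geq 3$, so Lemma \ref{betti-h} applies for all $i\leq 3$ and gives
$$\beta_{i,i+2}(\Jcal_{[{\bf b}]})=\sum_{j=0}^{i+2}(-1)^{j+1}{{c({[{\bf b}]})}\choose{i+2-j}}h_j,\qquad i=1,2,3,$$
where the $h_j$ are the $h$-vector entries of $R_{[{\bf b}]}$. Substituting $h_j=A({[{\bf b}]},j)$ from Theorem \ref{partition} finishes the argument.

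There is no genuine obstacle, since every nontrivial ingredient is already established: the recognition of the $h$-vector as Simon Newcomb numbers and the Rubei $N_{2,3}$ property. The only conceptual remark worth making is that the two displayed formulas are compatible, as the case $i=0$ of Lemma \ref{betti-h} would reproduce the closed form for $\beta_{0,2}$ once one uses $h_1({[{\bf b}]})=c({[{\bf b}]})$, which in turn reflects the absence of linear generators in $\Jcal_{[{\bf b}]}$.
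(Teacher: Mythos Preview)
Your proposal is correct and follows essentially the same approach as the paper: the paper states the corollary is an immediate consequence of Lemma~\ref{betti-h} (combined with Rubei's Theorem~\ref{rubeiN2p} to ensure $p_2(\Jcal_{[{\bf b}]})\geq 3$ and Theorem~\ref{partition} to identify $h_j=A([{\bf b}],j)$), and you have unpacked exactly this. Your direct computation of $\beta_{0,2}$ via $\dim_k S_2 - H_{R_{[{\bf b}]}}(2)$ is a bit more explicit than what the paper writes, but it amounts to the same thing as the $i=0$ case of the lemma's identity, as you yourself note.
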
 

\begin{proposition}\label{ru}Let $m\geq n\geq 3$ and $K=\bc$. Let ${[{\bf b}]}={[b_1,...,b_n]}, {[{\bf b'}]}={[b'_1,...,b'_m]}$, assume that $ b'_1\geq b_1,..., b'_n\geq b_n.$ 
 Then  $\beta_{p,q}(R_{[{\bf b'}]})\geq \beta_{p,q}(R_{[{\bf b}]})$ for all $p,q$. In particular 
if $\beta_{p,q}(R_{[{\bf b}]})\not=0$, then $\beta_{p,q}(R_{[{\bf b'}]})\not=0,$ for any
 ${[b'_1,...,b'_m]}$ such that $m\geq n, b'_1\geq b_1,..., b'_n\geq b_n.$
\end{proposition}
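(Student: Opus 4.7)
The plan is to realize $R_{[{\bf b}]}$ as an algebra retract of $R_{[{\bf b'}]}$ and then apply the monotonicity of graded Betti numbers under algebra retracts of Koszul $K$-algebras, which is the mechanism used in \cite{Ru2}.

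First I would construct the section. Since $b'_i\ge b_i$ for $i=1,\dots,n$ and $m\ge n$, the map $\phi:\Mcal\to\Mcal'$ given by $\phi(v_1,\dots,v_n)=(v_1,\dots,v_n,0,\dots,0)$ is a well-defined injection. Because the padded coordinates vanish on every element of the image, $\phi$ commutes with the componentwise operations $U,V$ from Section \ref{Sorted}. Hence the $K$-algebra map
$$\sigma:K[T_v\mid v\in\Mcal]\longrightarrow K[T'_{v'}\mid v'\in\Mcal'],\qquad T_v\mapsto T'_{\phi(v)},$$
carries each generator $T_uT_v-T_{U(u,v)}T_{V(u,v)}$ of $\Jcal_{[{\bf b}]}$ to a generator of $\Jcal_{[{\bf b'}]}$, and therefore descends to an injection $\bar\sigma:R_{[{\bf b}]}\hookrightarrow R_{[{\bf b'}]}$.

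Next I would construct the retraction $\pi(T'_{v'})=T_v$ when $v'=\phi(v)$ with $v\in\Mcal$, and $\pi(T'_{v'})=0$ otherwise, and verify $\pi(\Jcal_{[{\bf b'}]})\subseteq\Jcal_{[{\bf b}]}$ by a case analysis on each sorting binomial $g=T'_uT'_v-T'_{U(u,v)}T'_{V(u,v)}$. If all four indices lie in $\phi(\Mcal)$, then $\pi(g)$ is the corresponding sorting binomial in $\Jcal_{[{\bf b}]}$; otherwise some coordinate of $u$ or $v$ violates membership (either a non-zero entry in position $>n$, or an entry exceeding $b_i$ for some $i\le n$), and by monotonicity of $V$ the same violation is inherited by $V(u,v)$, so both monomials of $g$ contain a variable killed by $\pi$ and $\pi(g)=0$. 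Thus $\pi$ descends to $\bar\pi:R_{[{\bf b'}]}\twoheadrightarrow R_{[{\bf b}]}$, and the identity $\bar\pi\circ\bar\sigma=\mathrm{id}$ is immediate on generators.

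Finally I would invoke the monotonicity: by Theorem \ref{grobnerbasis} both rings admit a quadratic Grobner basis and are therefore Koszul $K$-algebras, and the result from \cite{Ru2} gives $\beta_{p,q}(R)\le\beta_{p,q}(S)$ whenever $R$ is an algebra retract of $S$ in this Koszul setting. Applied to $\bar\sigma,\bar\pi$ this yields the stated inequality, and the non-vanishing assertion follows at once. The main obstacle lies in this last step: the classical algebra-retract inequality directly bounds $\dim\mathrm{Tor}^R_{i,j}(K,K)$, whereas the Betti numbers in the proposition are computed over two \emph{different} ambient polynomial rings; passing between these two flavours is exactly where the Koszul hypothesis and the assumption $K=\bc$ enter, and this transition is the technical content of \cite{Ru2} that we are borrowing.
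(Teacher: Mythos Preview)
Your retract construction in the first two paragraphs is correct: the section $\bar\sigma$ and retraction $\bar\pi$ between $R_{[{\bf b}]}$ and $R_{[{\bf b'}]}$ exist exactly as you describe, and your case analysis on the sorting binomials is sound. The problem is entirely in the last paragraph, and there you have a genuine gap rather than a borrowed step.

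The paper's argument (following \cite{Ru2} and \cite{Gr}) is \emph{not} an algebra-retract argument. It is representation-theoretic: over $K=\bc$ the group $GL(V_1)\times\cdots\times GL(V_n)$ with $\dim V_i=b_i+1$ acts on everything in sight, so each $\mathrm{Tor}^{S}_p(R_{[{\bf b}]},\bc)_{p+q}$ decomposes as $\bigoplus_{\lambda\in A^p_{V_i}} W_\lambda\otimes S^\lambda V_i$ into Schur modules. Green's remark in section~2 of \cite{Gr} is that the index set $A^p_{V_i}$ of partitions that appear is monotone in $\dim V_i$; enlarging $b_i$ can therefore only add summands, and each summand $S^\lambda V_i$ itself grows in dimension. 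This is exactly why the hypothesis $K=\bc$ is present---it is needed for complete reducibility and the Schur decomposition, not for anything involving retracts or Koszulness.

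What the algebra-retract mechanism actually delivers is an inequality $\dim\mathrm{Tor}^{R_{[{\bf b}]}}_i(K,K)_j\le\dim\mathrm{Tor}^{R_{[{\bf b'}]}}_i(K,K)_j$, via functoriality of the bar construction. Koszulness only tells you that both sides are concentrated on the diagonal $i=j$; it gives no device for converting this into a comparison of $\beta^{S}_{p,q}(R_{[{\bf b}]})$ and $\beta^{S'}_{p,q}(R_{[{\bf b'}]})$, which are $\mathrm{Tor}$ groups over two \emph{different} polynomial rings of different sizes. There is no general theorem of the shape ``$R$ a Koszul retract of $R'$ implies $\beta^{S}_{p,q}(R)\le\beta^{S'}_{p,q}(R')$'', and the sketch of \cite{Ru2} given in the paper supplies no such statement. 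Your invocation of $K=\bc$ in an otherwise characteristic-free retract argument is itself a warning sign that the black box you are pointing to is really the Schur-functor machinery, not a retract lemma. To rescue your route you would have to prove that missing monotonicity directly, and Koszulness alone does not do it.
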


\begin{proof}
 The proof follows by reading carefully \cite{Ru2}[Proof of Proposition 12(a)] and the remark of section 2 \cite{Gr}. 
For the commodity of the reader 
we give now a sketch of the proof:
It is well known that $\beta_{p,q}(R_{[{\bf b}]}) = \dim_{\bc} Tor^{S}_p(R_{[{\bf b}]},\bc )_{p+q}$. Now because $R_{[{\bf b}]}$ has
 a multigraduation $Tor^{S}_p(R_{[{\bf b}]},\bc )_{p+q}$ is a direct sum of Schur representations,
 that is for $i$ fixed we can write $Tor^{S}_p(R_{[{\bf b}]},\bc )_{p+q}=\oplus_{\lambda \in A^p_{V_i}} W_\lambda \otimes S^\lambda V_i$ with dim $V_i=b_i+1$, the important fact follows 
from the remark of section 2 \cite{Gr} that $A^p_{V_i}\subset A^p_{V'_i}$ if  $V_i\subset V'_i$, in particular if 
$Tor^{S}_p(S/\Jcal_{[b_1,...,b_{i-1},b_i,b_{i+1},...,b_n]},\bc )_{p+q}\not=0$ 
then $Tor^{S}_p(S/\Jcal_{[b_1,...,b_{i-1},b_i+1,b_{i+1},...,b_n]},\bc )_{p+q}\not=0$.
\end{proof}
\begin{corollary}\label{betti} Let $m\geq n\geq 3$, $K=\bc$. Let ${[{\bf b}]}={[b_1,...,b_n]}, {[{\bf b'}]}={[b'_1,...,b'_m]}$, assume that $ b'_1\geq b_1,..., b'_n\geq b_n.$ 
 We have 
 $$0<\beta_{c({[{\bf b}]}),c({[{\bf b}]})+\reg(R_{[{\bf b}]})}(R_{[{\bf b}]})\leq \beta_{c{[{\bf b}]},c({[{\bf b}]})+\reg(R_{[{\bf b}]})}(R_{[{\bf b'}]}).$$
As a special case we have that for any $ m\geq 3,$ 
 $$\beta_{2^m-m-2,2^m-2}(\Jcal_{[{\bf b}]})\geq \beta_{2^m-m-2,2^m-2}(\Jcal_{\underbrace{1,...,1}_{m }})\not=0.$$
The Betti diagram of the ideal $\Jcal_{[{\bf b}]}$ 
has the following shape:

\begin{tabular}{|c|c|c|c|c|c|c|c|c|c|}
  \hline
   & 0 & 1 & 2 & 3&4 &5 &... & ...&$c({[{\bf b}]})-1$\\
  \hline
 2& $\beta_{0,2}$& $\beta_{1,3}$ & $\beta_{2,4}$ &$\beta_{3,5}$ & & & & &... \\
  \hline
 3 &0 & 0 &0 &$\star $ & & & & &...\\
  \hline
  4 &0 & 0 &0 &... & & & & &...\\ \hline
  ... &0 & 0 &0 &...  & & & & &...\\
  \hline
 $b_1+...+b_n-b^*+1$ &0 & 0 &0 &.... & & & & &${{b^* }\choose{b_1}}\times ...\times {{b^*}\choose{b_n}}$\\
 \hline
\end{tabular}

where $b^*=\max\{b_1,...,b_n\} $ and $\star $ is non zero. 
\end{corollary}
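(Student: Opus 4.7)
The corollary packages three claims: (a) the strict inequality $0 < \beta_{c([{\bf b}]),c([{\bf b}])+\reg(R_{[{\bf b}]})}(R_{[{\bf b}]}) \leq \beta_{c([{\bf b}]),c([{\bf b}])+\reg(R_{[{\bf b}]})}(R_{[{\bf b'}]})$; (b) the non-vanishing specialization for the Segre cube $[1,\dots,1]_m$; (c) the shape of the Betti diagram of $\Jcal_{[{\bf b}]}$. My plan is to handle them in this order, leaning on Theorem \ref{simplicial}, Proposition \ref{ru}, Theorem \ref{rubeiN2p}, and the formula for $h_{r_{[{\bf b}]}}$ from the earlier corollary.

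For (a), the ring $R_{[{\bf b}]}$ is Cohen--Macaulay by Theorem \ref{simplicial}, so by Auslander--Buchsbaum the projective dimension equals the codimension $c([{\bf b}])$; and for a Cohen--Macaulay graded quotient the maximum of $j$ over pairs with $\beta_{i,i+j}\neq 0$ (i.e., the regularity) is attained at $i=c([{\bf b}])$. To make the value explicit I would reduce $R_{[{\bf b}]}$ modulo a linear system of parameters to an Artinian quotient $\bar R$: the top Koszul identification $\mathrm{Tor}^{S'}_{c}(\bar R,k)_{c+r}=(\mathrm{Soc}\,\bar R)_r$, combined with $\bar R_{r+1}=0$, gives $(\mathrm{Soc}\,\bar R)_r=\bar R_r$, whose dimension is $h_{r_{[{\bf b}]}}([{\bf b}])=\binom{b^*}{b_1}\cdots\binom{b^*}{b_n}$ by the earlier corollary. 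This is strictly positive, and the right-hand inequality is an immediate specialization of Proposition \ref{ru} at the pair $(p,q)=(c([{\bf b}]),\,c([{\bf b}])+\reg(R_{[{\bf b}]}))$.

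For (b), I specialize (a) to $[{\bf b}]=[1,\dots,1]$ ($m$ copies). There the $h$-vector of $R_{[1,\dots,1]}$ is the Eulerian polynomial $A_m(t)$ of degree $m-1$, so $\reg(R_{[1,\dots,1]})=m-1$ and $c([1,\dots,1])=2^m-m-1$; the top Betti of $R_{[1,\dots,1]}$ lives at bidegree $(2^m-m-1,\,2^m-2)$. Applying the standard shift $\beta_{i,j}(\Jcal)=\beta_{i+1,j}(S/\Jcal)$ yields $\beta_{2^m-m-2,\,2^m-2}(\Jcal_{[1,\dots,1]})\neq 0$, and Proposition \ref{ru} transports this non-vanishing to every $\Jcal_{[{\bf b}]}$ dominating $[1,\dots,1]$ coordinatewise. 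For (c), the number of rows in the Betti diagram of $\Jcal_{[{\bf b}]}$ is $\reg(\Jcal_{[{\bf b}]})=r_{[{\bf b}]}+1=b_1+\dots+b_n-b^*+1$ and the number of columns is $c([{\bf b}])$, both by the earlier corollary and Cohen--Macaulayness. Theorem \ref{rubeiN2p} contributes the rest of the shape: the $N_{2,3}$ property kills every $\beta_{i,i+2+j}(\Jcal)$ with $i\leq 2$ and $j\geq 1$, producing the block of zeros in rows $3,4,\dots$ under the first three columns; the failure of $N_{2,4}$ forces a non-zero entry $\star$ in some row $\geq 3$ of column $3$. The bottom-right entry is the top Betti from (a).

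The only genuinely technical point is the identification of the top-right Betti number with $h_{r_{[{\bf b}]}}([{\bf b}])$; but once one reduces to the Artinian $\bar R$, the vanishing $\bar R_{r+1}=0$ makes the entire top graded piece of $\bar R$ equal to its socle, and the Koszul-homology computation of $\mathrm{Tor}^{S'}_c(\bar R,k)$ closes the loop. Everything else is bookkeeping with the cited results.
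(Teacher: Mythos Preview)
Your argument is largely correct and follows the paper's strategy: Cohen--Macaulayness gives the extremal Betti number as the top $h$-vector entry, and Proposition~\ref{ru} transports non-vanishing upward. Your Artinian-reduction justification for $\beta_{c,c+r}(R_{[{\bf b}]})=h_{r_{[{\bf b}]}}([{\bf b}])$ is more detailed than what the paper writes (the paper simply asserts this equals $A([{\bf b}],\reg R_{[{\bf b}]})$), but the content is the same.

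There is one small gap in your handling of the $\star$ entry. The table places $\star$ specifically at position $(3,6)$, i.e.\ row~$3$, column~$3$; your argument from the failure of $N_{2,4}$ only yields $\beta_{3,5+j}(\Jcal_{[{\bf b}]})\neq 0$ for \emph{some} $j\geq 1$, so you conclude ``some row $\geq 3$ of column~$3$'' rather than row~$3$ exactly. The paper closes this by exhibiting the explicit Betti table of $\Jcal_{[1,1,1]}$, where $\beta_{3,6}=1$, and then invoking Proposition~\ref{ru} to deduce $\beta_{3,6}(\Jcal_{[{\bf b}]})\neq 0$ for every $[{\bf b}]$ with $n\geq 3$. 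You can repair your argument the same way without any new input: your own part~(b) with $m=3$ gives $c([1,1,1])=4$, $\reg R_{[1,1,1]}=2$, hence $\beta_{3,6}(\Jcal_{[1,1,1]})\neq 0$, and then Proposition~\ref{ru} finishes. So the ingredient is already on your page; you just need to apply it at $m=3$ instead of appealing to the failure of $N_{2,4}$.
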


\begin{proof} Since $R_{[{\bf b}]}$ is a Cohen-Macaulay ring we know that 
$$\beta_{c({[{\bf b}]}),c({[{\bf b}]})+\reg(R_{[{\bf b}]})}(R_{[{\bf b}]})=A({[{\bf b}]},\reg(R_{[{\bf b}]}))\not= 0,$$ so the first assertion follows from
 the  Proposition \ref {ru}.
In order to prove  that $\star $ is non zero, let write down the Betti diagram of  $\Jcal_{[1,1,1]}$ :

\begin{tabular}{|c|c|c|c|c|}
  \hline
   & 0 & 1 & 2 &3\\
  \hline
 2& 9& 16 & 9 &0  \\
  \hline
 3 &0 & 0 &0 &1 \\
  \hline 
\end{tabular}

Hence $\beta_{3,6}(\Jcal_{[1,1,1]})\not=0$, which by the  Proposition \ref {ru} implies $\beta_{3,6}(\Jcal_{[{\bf b}]})\not=0$ for any ${[{\bf b}]}$ with $n\geq 3$.
\end{proof}
\begin{example}\label{example-fin}In the cases where the regularity of $\Jcal_{[{\bf b}]}$ is 3 or 4, we can compute the above Betti numbers.
(We suppose $n\geq 3$). 

The Betti diagram of  $\Jcal_{[1,1,2]}$ :

\begin{tabular}{|c|c|c|c|c|c|c|c|}
  \hline
   & 0 & 1 & 2 &3&4&5&6\\
  \hline
 2& 24& 84 & 126 &84&$\bullet$ &$\bullet$&$\bullet$  \\
  \hline
 3 &0 & 0 &0 &$\star$ &$\bullet$&$\bullet$&4 \\
   \hline 
\end{tabular}

The Betti diagram of  $\Jcal_{[1,1,1,1]}$ :

\begin{tabular}{|c|c|c|c|c|c|c|c|c|c|c|c|}
  \hline
   & 0 & 1 & 2 &3&4&5&6&7&8&9&10\\
  \hline
 2& 55& 320 & 891 &1408&$\bullet$ &$\bullet$&$\star$&0&0&0&0  \\
  \hline
 3 &0 & 0 &0 &$\star$ &$\bullet$&$\bullet$&1408&891&320&55&0 \\
   \hline 
   4 &0 & 0 &0 &0 &0&0&0&0&0&0&1 \\
   \hline 
\end{tabular}

\end{example}
\begin{corollary}
If $n\geq 3$, the minimal free resolution of $\Jcal_{[{\bf b}]}$ is  pure if and only if $n=3$ and $b_1=b_2=b_3=1$.
\end{corollary}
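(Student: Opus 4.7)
The plan is to split the equivalence into sufficiency (the case ${[{\bf b}]}=[1,1,1]$) and necessity (all other ${[{\bf b}]}$ with $n\geq 3$), and in both directions exploit the Betti diagrams of the elementary Segre rings $\Jcal_{[1,1,1]}$, $\Jcal_{[1,1,2]}$, $\Jcal_{[1,1,1,1]}$ together with the monotonicity principle of Proposition~\ref{ru}.

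For the sufficiency direction I would simply read off the Betti diagram of $\Jcal_{[1,1,1]}$ displayed in the proof of Corollary~\ref{betti}: its only non-zero graded Betti numbers are $\beta_{0,2}=9$, $\beta_{1,3}=16$, $\beta_{2,4}=9$ and $\beta_{3,6}=1$. Since every homological step carries a single internal degree, the resolution is pure with degree sequence $(2,3,4,6)$.

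For the necessity direction assume $n\geq 3$ and ${[{\bf b}]}\ne[1,1,1]$; the goal is to exhibit two distinct internal degrees at homological position $3$. The non-vanishing $\beta_{3,6}(\Jcal_{[{\bf b}]})\ne 0$ is already the content of the $\star$ entry in the Betti table of Corollary~\ref{betti}, which holds for every $n\geq 3$. It remains to show that $\beta_{3,5}(\Jcal_{[{\bf b}]})\ne 0$ as well, and I would do so by splitting into two subcases. If $n\geq 4$, the componentwise inclusion $[1,1,1,1]\leq [b_1,\dots,b_n]$ combined with Proposition~\ref{ru} gives $\beta_{3,5}(\Jcal_{[{\bf b}]})\geq \beta_{3,5}(\Jcal_{[1,1,1,1]})=1408$, the last value being read from Example~\ref{example-fin}. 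If instead $n=3$ but $\max b_i\geq 2$, I relabel so that $b_3\geq 2$, obtaining the inclusion $[1,1,2]\leq [b_1,b_2,b_3]$; Proposition~\ref{ru} then yields $\beta_{3,5}(\Jcal_{[{\bf b}]})\geq \beta_{3,5}(\Jcal_{[1,1,2]})=84$, again from Example~\ref{example-fin}. These two subcases exhaust every ${[{\bf b}]}\ne [1,1,1]$ with $n\geq 3$, so the third Betti column always has entries in at least two rows and purity fails.

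The main obstacle is not logical but numerical: the whole argument rests on the two base-case non-vanishings $\beta_{3,5}(\Jcal_{[1,1,2]})\ne 0$ and $\beta_{3,5}(\Jcal_{[1,1,1,1]})\ne 0$ recorded in Example~\ref{example-fin}. These ultimately trace back to Lemma~\ref{betti-h} applied to the Simon Newcomb numbers $A([{\bf b}],j)$ for small $j$ (or to a direct computer algebra computation). Once those two explicit numbers are accepted, the proof reduces to combining the monotonicity of Proposition~\ref{ru} with the $\star$ of Corollary~\ref{betti} and a case split on $n$.
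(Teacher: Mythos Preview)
Your proposal is correct and follows essentially the same approach as the paper: the paper's proof is the one-liner ``immediate consequence of Example~\ref{example-fin} and Corollary~\ref{betti},'' and your argument is precisely an unpacking of that sentence, splitting on $n=3$ versus $n\geq 4$ and reading the needed non-vanishings $\beta_{3,5}$ and $\beta_{3,6}$ from those two results (with Proposition~\ref{ru} supplying the monotonicity that underlies Corollary~\ref{betti}). The only minor point to make explicit is that your relabeling step ``so that $b_3\geq 2$'' is legitimate because the Segre ring $R_{[{\bf b}]}$ is invariant under permutation of the $b_i$.
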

\begin{proof} The proof  is an  immediate consequence of the Example \ref{example-fin}  and of Corollary \ref{betti}.
\end{proof} 

 \end{document}